\newcommand*\circledaux[1]{\tikz[baseline=(char.base)]{
    \node[shape=circle,draw,inner sep=0.8pt] (char) {#1};}}
\NewDocumentCommand{\circled}{ m o }{%
    \IfNoValueTF{#2}{ \circledaux{#1} }{ \stackrel{\circledaux{#1}}{#2} }%
}
\newcommand{\R}{\mathbb{R}}
\newcommand{\ones}{\mathbb{1}}
\def\Hy@raisedlink@left#1{%
    \ifvmode
        #1%
    \else
        \Hy@SaveSpaceFactor
        \llap{\smash{%
        \begingroup
            \let\HyperRaiseLinkLength\@tempdima
            \setlength\HyperRaiseLinkLength\HyperRaiseLinkDefault
            \HyperRaiseLinkHook
        \expandafter\endgroup
        \expandafter\raise\the\HyperRaiseLinkLength\hbox{%
            \Hy@RestoreSpaceFactor
            #1%
            \Hy@SaveSpaceFactor
        }%
        }}%
        \Hy@RestoreSpaceFactor
        \penalty\@M\hskip\z@
    \fi
}
\newcommand\linktoproof[1]{{\normalfont[{\hyperlink{proof:#1}{$\downarrow$}}]}}
\newcommand\linkofproof[1]{\textbf{of \cref{#1}. }\newtarget{proof:#1}}
\newcommand{\bigomega}[1]{\Omega( #1 )}
\newcommand{\bigomegal}[1]{\Omega\left( #1 \right)}
\newcommand{\bigol}[1]{O\left( #1 \right)}
\newcommand{\jmlrBlackBox}{\rule{1.5ex}{1.5ex}}
\newcommand{\jmlrQED}{\hfill\jmlrBlackBox\par\bigskip}
\providecommand{\proofname}{Proof}
\renewenvironment{proof}%
{%
 \par\noindent{\bfseries\upshape \proofname\ }%
}%
{\jmlrQED}
\crefname{equation}{Eq.}{Eqs.}
\newcommand{\mP}{\mathcal{P}}
\newcommand{\mQ}{\mathcal{Q}}
\newcommand{\mA}{\mathcal{A}}
\newcommand{\mX}{\mathcal{X}}
\newcommand{\verts}[1]{\mathcal{V}_{#1}}
\newcommand{\conv}[1]{\mathsf{conv}\left(#1\right)}
\newcommand{\aff}[1]{\mathsf{aff}\left(#1\right)}
\newcommand{\faces}[1]{{\mathsf{faces}^\ast}\left(#1\right)}
\newcommand{\facets}[1]{\mathsf{facets}\left(#1\right)}
\newcommand{\dist}[1]{\mathsf{dist}\left(#1\right)}
\NewDocumentCommand{\pw}{o m}{%
  \mathop{%
    \delta%
    \IfValueT{#1}{^{#1}}%
    _{#2}%
  }%
}
\NewDocumentCommand{\vf}{o m}{%
  \mathsf{vf}%
  \IfValueT{#1}{^{#1}}%
  _{#2}%
}
\NewDocumentCommand{\diam}{o m}{
  D
  \IfValueT{#1}{^{#1}}_#2
}
\newcommand{\innp}[1]{\left\langle #1 \right\rangle}
\DeclareMathOperator*{\argmin}{\mathrm{arg\,min}}
\DeclareMathOperator*{\argmax}{\mathrm{arg\,max}}
\DeclarePairedDelimiter\ceil{\lceil}{\rceil}
\DeclarePairedDelimiter\floor{\lfloor}{\rfloor}
\newcommand{\norm}[1]{\| #1 \|}
\newcommand{\defi}{\stackrel{\mathrm{\scriptscriptstyle def}}{=}}
\newtheorem{theorem}{Theorem}[section]
\newtheorem{lemma}[theorem]{Lemma}
\newtheorem{proposition}[theorem]{Proposition}
\newtheorem{definition}[theorem]{Definition}
\newtheorem{corollary}[theorem]{Corollary}
\title{Linear Convergence of the Frank-Wolfe Algorithm over Product Polytopes}
\author{%
    Gabriele Iommazzo\\
    Zuse Institute Berlin, Berlin, Germany\\
    \texttt{iommazzo@zib.de}\\
    \And
    David Martínez-Rubio\\
    Signal Theory\\ and Communications Department,\\ Carlos III University,\\ Madrid, Spain\\
    \texttt{dmrubio@ing.uc3m.es}\\
    \And
    Francisco Criado \\
    Departamento de Matemáticas,\\ CUNEF Universidad, Madrid, Spain \\
    \texttt{francisco.criado@cunef.edu}\\
    \And
    Elias Wirth\\
    Berlin Institute of Technology,\\ Berlin, Germany \\
    \texttt{wirth@math.tu-berlin.de}\\
    \And
    Sebastian Pokutta\\
    Technische Universität Berlin,\\ and Zuse Institute Berlin,\\ Berlin, Germany \\
    \texttt{pokutta@zib.de}\\
}
\let\epsilon\varepsilon
\newif\ifTODO 
\definecolor{aquamarine}{rgb}{0.5, 1.0, 0.83}
\newcommand{\Dav}[1]{\ifTODO \todo[inline,author=DM, caption={}, color=yellow]{#1} \fi}
\newcommand{\dav}[1]{\ifTODO \todo[size=\scriptsize, color=yellow]{DM: #1}{} \fi}
\newcommand{\eli}[1]{\ifTODO \todo[size=\scriptsize, color=magenta]{EW: #1}{} \fi}
\newcommand{\gabrev}[1]{\textcolor{black}{#1}}
\newcommand\newlink[2]{\hyperlink{#1}{\normalcolor #2}}
\newcommand\newtarget[2]{\Hy@raisedlink{\hypertarget{#1}{}}#2}
\newcommand{\PL}{\newlink{def:acronym_polyak_lojasiewicz}{PL}}
\newcommand{\QG}{\newlink{def:acronym_quadratic_growth}{QG}}
\newcommand{\FW}{\newlink{def:acronym_frank_wolfe}{FW}}
\newcommand{\AFW}{\newlink{def:acronym_away_frank_wolfe}{AFW}}
\newcommand{\LMO}{\newlink{def:acronym_linear_minimization_oracle}{LMO}}
\newcommand{\w}[1]{\operatorname{\delta}_{#1}}
\newcommand{\what}{\operatorname{\bar{\delta}}}
\newcommand{\X}{\mathcal{X}}
\newcommand{\wface}[2]{\operatorname{\Delta}_{#1}(#2)}
\newcommand{\whatface}[2]{\operatorname{\bar{\Delta}}_{#1}(#2)}
\begin{document}

\maketitle
\begin{abstract}
We study the linear convergence of Frank-Wolfe algorithms over product polytopes. We analyze two condition numbers for the product polytope, namely the \emph{pyramidal width} and the \emph{vertex-facet distance}, based on the condition numbers of individual polytope components.
%
As a result, for convex objectives that are $\mu$-Polyak-\L{}ojasiewicz, we show linear convergence rates quantified in terms of the resulting condition numbers.
We apply our results to the problem of approximately finding a feasible point in a polytope intersection in high-dimensions, and demonstrate the practical efficiency of our algorithms through empirical results.

\end{abstract}

\section{Introduction}\label{s:intro}

Frank-Wolfe algorithms (\newtarget{def:acronym_frank_wolfe}{\FW{}}), also known as Conditional Gradient methods, are widely used first-order optimization algorithms. They are particularly suited for constrained convex problems $\min_{x\in\X} f(x)$, for a convex compact set $\X$, where linear minimization over $\X$ is used to keep the iterates feasible. The simplicity of this family of methods, along with its projection-free nature, make it especially appealing in large-scale settings, where projections can be costly. This is often the case when the feasible set is a polytope \cite{BCC+23}. More concretely, \FW{} methods compute iterates as combinations of extreme points in the feasible set $\mX$ returned by a linear minimization oracle (\newtarget{def:acronym_linear_minimization_oracle}{\LMO{}}):
\begin{equation}
g \mapsto \argmin_{x \in \mX} \innp{g, x} \tag{LMO}
\label{eq:lmo}
\end{equation}
on $\mX$, where $g$ is some vector, usually the current gradient of the objective function or an approximation thereof.
After computing \eqref{eq:lmo}, the standard \FW{} method takes a step from the current iterate in the direction of the \LMO{} solution.
In many scenarios, \eqref{eq:lmo}
is a cheaper alternative to the quadratic optimization problem that projections consist of, cf. \cite{Jag13}, and can be computed even when $\mX$ is defined implicitly, such as a convex hull of a set of points.

The convergence behavior of Frank-Wolfe methods is strongly influenced by the geometry of the constraint set, that of the function, and the location of the unconstrained minimizer. While sublinear rates are typical in general convex and smooth settings, it is known that linear convergence can be achieved under additional assumptions, such as global optimizers being in the interior of $\X$ \citep{wolfe1970convergence,guelat1986some}, strong convexity of the feasible set along with all global optimizers being outside of it \citep{levitin1966constrained}, or strong convexity of $f$ for $\X$ being a polytope having some positive condition number. This last assumption holds for one of several different notions of conditioning that have been defined \citep{lacoste2013affine,GH15,LJ15,BS15,penya2016vonneumann,PR18}, 
among which there is the pyramidal width and the vertex-facet distance that we use in this paper, cf. \cref{s:premises}. A few works extend the applicability of the algorithms to larger classes of functions such as the ones satisfying the Polyak-\L{}ojasiewicz (\newtarget{def:acronym_polyak_lojasiewicz}{\PL{}}) condition, as opposed to strong convexity \citep{BS15,BCC+23}. We quantify the implications of our condition number results in terms of linear convergence rates for our applications.

Importantly, we note that much of the aforementioned theory remains confined to the setting of single polytopes, with limited understanding of how favorable properties like good condition numbers might extend and compose in more structured domains. In this work, we focus on optimization over \emph{product polytopes}. This setting arises naturally in machine learning applications, specially via the feasibility problem for the intersection of several feasible sets, such as in image representation in computerized tomography \cite{CCC+09}, compressed sensing \cite{CCP12},
and including signal processing and statistical estimation \cite{SY98,CP10proximal}, image recovery \cite{Com96}, absolute value equations \cite{ACT23}, matrix completion \cite{RSV12} and robust PCA \cite{MA18}, SVMs \cite{LJS+13} and clustering \cite{BRZ23}.

A natural question is whether the good conditioning of individual polytopes can be lifted to the product domain in a way that still allows for linear convergence guarantees. We provide a surprising positive answer as the main contribution of this work: \emph{the product of well-conditioned polytopes inherits a form of good conditioning}, that enables linear convergence of \FW{} algorithms for smooth, convex, \PL{}-objectives by using either the pyramidal width or the vertex-facet distance. Positive polytope condition numbers, such as the pyramidal width or the vertex-facet distance, typically capture how well the directions found by the \LMO{} in the polytope align with the idealized direction $x^\ast - x_t$ (from an iterate $x_t$ towards a minimizer $x^\ast$) relative to the direction of the gradient that was used for the \LMO{}, cf. \cref{a:linconv_pw}. Our findings imply that this alignment does not degrade much when considering a product polytope, with respect to the individual components.

We apply our results to the aforementioned important primitive in large-scale optimization: approximately finding a feasible point in the intersection of several polytopes or certifying that such intersection is empty. In other words, the problem of interest is, given $k \geq 2$ polytopes $\mP_i$,
\begin{equation}
\text{find } x \in \bigcap_{i=1}^k \mathcal{P}_i\,, \tag{P}
\label{prob:cfp}
\end{equation}
if it exists, or determine that the intersection is empty. An  \emph{$\epsilon$-approximate} solution of \eqref{prob:cfp} consists of finding a point whose Euclidean distance to the intersection is less than $\epsilon$, or determining that no such point exists. Finding an approximate feasible point in the intersection of $k$ polytopes can be turned into an approximate solver for linear programming over such intersection as feasible set, by performing a binary search objective value, see, e.g., \cite{DVZ22} and references therein.

For the above task, we consider the Away Frank-Wolfe (\newtarget{def:acronym_away_frank_wolfe}{\AFW{}}) algorithm \cite{GM86} (see \cref{algo:AFW} in \cref{a:algos}) over the Cartesian product, where we have a variable for each polytope and minimize an objective that penalizes each of the pairwise distances between the $k$ variables, cf. \eqref{prob:intersection_k}. Our objective is convex, smooth and satisfies the \PL{} inequality. Naturally, this approach uses the \LMO{} of the resulting product polytope that is the product of the corresponding  \LMO{} outputs for each polytope component, that can be computed in parallel.

Our technical contributions are the following.

\paragraph{Contributions}
\begin{itemize}
    \item Regarding polytope condition numbers, we show that, if the \textit{pyramidal widths} of two polytopes $\mathcal{P}_1$ and $\mathcal{P}_2$ are $\delta_{\mathcal{P}_1}$ and $\delta_{\mathcal{P}_2}$, that of the product is exactly $\delta_{\mathcal{P}_1} \delta_{\mathcal{P}_2} /\sqrt{\delta_{\mathcal{P}_1}^2 + \delta_{\mathcal{P}_2}^2}$. This yields a pyramidal width $\delta_\mathcal{P} = \bigomega{\frac{1}{\sqrt{k}}\min_{i\in[k]} \delta_{\mathcal{P}_i}}$ for the product $\mathcal{P} = \prod_{i\in[k]} \mathcal{P}_i$ of $k$ polytopes. Our proof uses the \textit{affine pyramidal width}, which we introduce as a new equivalent characterization of the pyramidal width that is easier to work with in our proofs. Similarly, for the \textit{vertex-facet distance}, we show $\vf{\mP} = \min_{i\in[k]} \vf{\mP_{i}}$.

   \item As a consequence of studying these two polytope condition numbers, we can quantify linear rates of convergence for the \AFW{} algorithm optimizing convex, smooth functions satisfying the \PL{} inequality over a product polytope, which inherits its conditioning from the components.
   \item We apply our results to the problem of approximately finding a feasible point in the intersection of several polytopes, by minimizing \eqref{prob:intersection_k}, a convex, smooth and \PL{} objective.
   \item We substantiate our theoretical findings with empirical evidence, demonstrating the algorithm's applicability across large-scale optimization tasks.
\end{itemize}

The novel characterization of the pyramidal width as defined by the affine pyramidal width is of independent interest, and we believe that it can offer new insights into the optimization over polyhedral sets.

\section{Other related work} \label{s:literature}

\paragraph{Polytope condition numbers}
A line of research has established nontrivial lower bounds for polytope condition numbers in key domains \cite{wirth2023approximate,CFK23}. Recent works have further refined affine-invariant convergence analyses by exploiting facial distance to obtain improved Frank–Wolfe rates \cite{wirth2025accelerated,pena2023affine}.

\paragraph{Convex feasibility via projections.}
For the convex feasibility problem, classical methods make use of projections as opposed to linear minimization oracles, including von Neumann’s alternating projections and its extensions \cite{Von49,Gin18}, and solve the intersection problem by repeatedly projecting onto each set. In general these approaches achieve only sublinear convergence \cite{BS16,BC11,BPW23}, though linear rates can be recovered under additional regularity or transversality assumptions on the intersection \cite{GPR67,BB93,BB96,BT03,DIL15,BLM21,BBS21}.

\paragraph{Frank–Wolfe for feasibility and block‐coordinate variants.}
Several works applied \FW{}‐type methods to the convex feasibility problem. Early distance‐minimization schemes by \citet{Wil68,Wol76} use FW methods without providing convergence rates. More recent block‐coordinate FW algorithms \cite{BPS15,BRZ23} exploit product‐structure by cycling or randomly selecting blocks, but without global linear rates unless combined with costly subroutines. An augmented‐Lagrangian FW approach under a PL condition was proposed in \cite{GPL18} for two blocks, though it requires stringent feasibility assumptions and does not easily extend to larger $k$.

\paragraph{FW convergence and fast convergence rates.}
At the core of Frank–Wolfe research is the study of fast \cite{garber2015faster,KDP20,wirth2023acceleration,wirth2025accelerated} and affine‐invariant linear rates \cite{LJ15,TTP22,pena2023affine,wirth2024fast}. To derive linear rates, three key premises \cite{PR18} appear throughout the literature.
The first premise involves a positive \emph{condition number of the polytope}.
The condition number is used to establish an inequality comparing (A) the alignment of the gradient with the direction taken by the algorithm, obtained by the \LMO{} with the gradient as direction, and (B) the alignment of the gradient with the idealized direction $x_t - x^\ast$, cf. \cref{thm:scaling_ineq}.

The second premise is an \emph{objective-function growth} condition from the minimizers.
The standard assumption in this case is strong convexity \cite{GM86,LJ15,GH15,TTP22}, although weaker conditions have been successfully applied (see, e.g., \cite{BS15,LJ15,wirth2024fast}).
The third premise is a condition that guarantees we can obtain descent on the objective, such as smoothness. Our work also makes sure of these three premises in order to achieve linear convergence rates.
\section{Notation and Preliminaries}\label{s:preliminaries}

We denote the set of vertices of an $n$-dimensional (bounded) polytope $\mP$ by $\verts{\mP}$, the convex hull and affine hull of its vertices by $\conv{\mP}$ and $\aff{\mP}$, respectively, the set of its proper faces by $\faces{\mP}$ and the set of its facets by $\facets{\mP}$.
Recall that a face
is \emph{proper} if it is neither $\mP$ itself nor the empty set.
Facets are proper ``maximal'' faces, in the sense that they are not contained in any other face, so they have dimension $n-1$.

Throughout, we let $\|\cdot\|$ be the Euclidean norm and
$\dist{\mP, \mQ} \defi \min_{x \in \mP, y \in \mQ} \|x - y\|$ for nonempty polytopes $\mP, \mQ \subseteq \R^n$.
Further, we use the shorthand $\diam{\mP} \defi \max_{v, w \in \mathcal{P}} \|v - w\|$ to denote the \emph{diameter} of a polytope $\mathcal{P}$.
We also denote the Kronecker product of two matrices $A, B$ by $A \otimes B$.

We address the problem $\min_{x \in \mP} f(x)$ of minimizing a convex, smooth and \PL{} function over a polytope by considering \FW{} methods. Let $\Omega^\ast \defi \argmin_{x \in \mX} f(x)$ be its solution set, and we denote an arbitrary point in $\Omega^\ast$ by $x^\ast$.

Further, let $[k] \defi \{1, 2, \dots, k\}$. We denote the 
$m \times m$ identity matrix by $I_m$ and the $m$-dimensional vector of ones by $\ones_m$.
We make use of the Cartesian product of $k$ $n$-dimensional polytopes $\Pi_{i \in [k]} \mP_i \defi \mP_1 \times \mP_2 \times \cdots \times \mP_k \subset \R^{n \times k}$ denoting its components by upper indices $x = (x^1, x^2, \dots, x^k)$, where each $x^i \in \mP_i$ is said to be a \emph{block variable}.
In the following, we will consider blocks $I_t = \{i\}$, for some $i \in [k]$, and the partial gradients $\nabla^i f(x) \defi (\nabla f(x))^i$ of a differentiable function $f$ over block $i \in [k]$.
Recall that a face is a product of faces in this setting.

We will work with the following function regularity conditions, where the differentiability assumption over non-open sets $\mX$ is to be understood as differentiability in an open set containing $\mX$.


\begin{definition}[\textbf{Convexity and $L$-smoothness}]\label{def:conv_smooth}
    Let $\mX \subseteq \R^n$ be a convex set and $f : \R^n \rightarrow \R$ differentiable in $\mX$. The function $f$ is convex (resp. $L$-smooth) in $\mX$ if $\circled{1}$ holds for all $x, y \in \mX$ (resp. $\circled{2}$):
\[
    0 \circled{1}[\leq] f(x) -  f(y) - \innp{\nabla f(y), x - y} \circled{2}[\leq] \frac{L}{2}\left\| x - y\right\|^2.
    \vspace{0.3cm}
\]
\end{definition}
Under convexity, $L$-smoothness is equivalent to $\|\nabla f(x) - \nabla f(y)\| \le L\|x - y\|$, cf. \cite{Bec17}.

\begin{definition}[\textbf{Polyak-\L{}ojasiewicz (PL) condition}]
\label{def:pl}
Let $\mX \subseteq \R^n$ be a set over which we minimize a function $f : \R^n \rightarrow \R$, differentiable in $\mX$.
Assume the solution set $\Omega^\ast$ exists and is nonempty, and let the minimum function value be $f^\ast \defi f(x^\ast)$, for $x^\ast \in \Omega^\ast$.
We say that $f$ is $\mu$-\PL{} in $\mX$ if, for all $x \in \mX$,
\[
\frac{1}{2} \|\nabla f(x)\|^2 \geq \mu (f(x) - f^\ast).
\]
\end{definition}

A well-known fact is that under convexity, $\mu$-\PL{} is equivalent to the so-called $\mu$-quadratic growth condition (\newtarget{def:acronym_quadratic_growth}{\QG{}}), which is $f(x) - f^\ast \geq \frac{\mu}{2}\min_{x^\ast\in\Omega^\ast }\norm{x-x^\ast}$ for every $x\in \R^n$, cf. \citep{KNS16}.
\section{Conditioning of Product Polytopes and Convergence}\label{s:premises}
In this section, we develop our theory providing bounds for how our polytope condition numbers of interest behave when considering product polytopes. At the end of this section we quantify the linear rates of convergence that ensue from our results.

We start by studying the pyramidal width, and we show that if each subpolytope $\mP_i$ is well-conditioned in terms of pyramidal width lower bounds, then the product polytope $\mP$ is also well-conditioned.
We note that several equivalent definitions of the pyramidal width can be found in the literature. We make use of the one given by \citet{PR18},
where it is termed \emph{facial distance}, while the original notions can be found in \citep{lacoste2013affine}. See also \cite{BS15, LJ15}.
\eli{ if we rely on their characterization, i think we should call the objects we work with facial distances and not pyramidal width. we literally define things via the face. i strongly feel that we should make a final pass and change things to facial distance, it is the more descriptive term.

SOMEONE ELSE:
This change can be good and it's very easy to do. However, which term is more widely used? The CG book uses the pyramidal width term.

EW: Let's leave it then for now.
}


For a polytope $\mathcal{P}$ and one of its faces $f$, we write $P^f\defi\conv{\verts{\mP}\setminus \verts{f}}$.

\begin{definition}[\textbf{Pyramidal width \cite{PR18}}]\label{def:pyrwidth}
Let $\mP$ be a polytope, with at least two vertices and let $f$ be a face of $\mP$. We define the \emph{pyramidal width in $f$} $\wface{f}{\mP}$ of $\mP$ as the distance between $f$ and $\mP^f$. The \emph{pyramidal width} $\w{\mP}$ of $\mP$ is the minimum of $\wface{f}{\mP}$ over all faces $f$ of $\mP$. That is,
\begin{equation}\label{eq:pyrwidth_X}
\pw{\mP} \defi \min_{f \in \faces{\mP}} \dist{f, \conv{\verts{\mP} \setminus f}}.
    \vspace{0.3cm}
\end{equation}
\end{definition}

In order to achieve linear rates with FW algorithms it is enough to obtain lower bounds for the pyramidal width $\delta_{\mP}$ of the domain $\mP$. For a product polytope, $\mP = \prod_{i\in[k]}\mP_i$, we trivially have that $\delta_{\mP} \leq \min_{i\in[k]}\{\delta_{\mP_i}\}$ from the definition. In the sequel, we will show
a lower bound that is only a minor polynomial factor away from this upper bound, depending on $k$.
To that end, we introduce the following variant of the pyramidal width, which we show it is equivalent to the definition above.
\begin{definition}[\textbf{Affine pyramidal width}]
  Let $\mP$ be a polytope, and let $f$ be a face of $\mP$. The \emph{affine pyramidal width} of $\mP$ in $f$ is the distance between $\aff{f}$ and $\mP^f$. We denote it by $\whatface{f}{\mP}$.
  The \emph{affine pyramidal width} $\what_{\mP}$ of $\mP$ is the minimum of $\whatface{f}{\mP}$ over all faces $f$ of $\mP$. That is,
\begin{equation}\label{eq:affine_pyrwidth_X}
    \what_{\mP} \defi \min_{f \in \faces{\mP}} \dist{\aff{f}, \conv{\verts{\mP} \setminus f}}.
    \vspace{0.3cm}
\end{equation}
\end{definition}

Observe that, in these two definitions, one never considers the trivial faces $\emptyset$ and $\mP$. Allowing either would yield an infinite pyramidal width under the convention that the distance between a set and the empty set is infinity. Since $\mP$ is at least $1$-dimensional, a nontrivial face exists so the pyramidal width is finite.

We first show that the pyramidal width is always realized in the relative interior of a face, which implies that the two definitions above yield the same quantity. However, the affine pyramidal width that we introduced will be more convenient for our compositional proofs in product polytopes.

We note
that $\whatface{f}{\mP}$ is not always equal to $\wface{f}{\mP}$. Our claim only applies to $\w{\mP} = \what_{\mP}$, that is, to the minimum over all faces for each respective quantity.

\begin{lemma}\linktoproof{lemma:affine_pyramidal_width} \label{lemma:affine_pyramidal_width}
  Let $\mP$ be a full-dimensional polytope in $\R^n$ for $n\geq 1$, and let $f$ be a face of $\mP$ that achieves the minimum affine pyramidal width. Let $p$ be a point of $\aff{f}$ that realizes the minimum distance.
  Then, $p$ is in the relative interior of $f$.
\end{lemma}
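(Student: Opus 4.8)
The plan is to argue by contradiction: assume the minimizing point $p$ lies outside $\operatorname{relint}(f)$ and either exhibit a proper face of $\mP$ whose affine pyramidal width is strictly smaller than $\what_{\mP}$, or contradict the optimality of the nearest pair directly; both contradict that $f$ attains the minimum. Throughout I will fix a point $q\in\mP^f$ with $\norm{p-q}=\what_{\mP}$, so that $(p,q)$ is a nearest pair; since the nearest point of an affine subspace to $q$ is unique, $p=\proj_{\aff{f}}(q)$, and hence $v\defi p-q$ is orthogonal to the direction space of $\aff{f}$. A preliminary observation is that $\what_{\mP}>0$: a supporting hyperplane of the proper face $f$ strictly separates $\aff{f}$ from every vertex of $\mP$ not in $f$, hence from all of $\mP^f$, so $v\neq 0$. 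If $\dim f=0$ then $\aff{f}=f=\operatorname{relint}(f)$ and there is nothing to prove, so I may assume $\dim f\ge 1$.

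Next I would set up $f$ by its facet description inside $\aff{f}$: write $f=\{x\in\aff{f}: \innp{a_i,x}\le\beta_i,\ i=1,\dots,m\}$, with each $a_i$ in the direction space of $\aff{f}$ and each inequality defining a facet $F_i$ of $f$ (a proper face of $\mP$ of dimension $\dim f-1$). Then $p\notin\operatorname{relint}(f)$ means $\innp{a_i,p}\ge\beta_i$ for some $i$, and the argument splits according to whether this inequality is strict; the two cases correspond to $p\notin f$ and to $p$ lying on the relative boundary of $f$.

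In the first (strict) case, $\innp{a_i,p}>\beta_i$, and I would use that $\mP^{F_i}$ contains both $\mP^f$ and every vertex $u$ of $f$ off $F_i$. Picking such a $u$ (with $\innp{a_i,u}<\beta_i$) and travelling along the segment $[q,u]\subseteq\mP^{F_i}$, there is a unique convex combination $y=(1-\theta)q+\theta u$ with $\innp{a_i,y}=\beta_i$ and $\theta\in(0,1)$ (using $\innp{a_i,q}=\innp{a_i,p}>\beta_i$, since $v\perp a_i$). Decomposing the distance to $\aff{F_i}$ into the mutually orthogonal $a_i$-component and the component above $\aff{f}$, the $a_i$-component of $y$ vanishes while its height above $\aff{f}$ is $(1-\theta)v$; hence $\dist{\aff{F_i},\mP^{F_i}}\le(1-\theta)\norm{v}<\what_{\mP}$, contradicting minimality. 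The key (and slightly counterintuitive) point is that although projecting $q$ onto the smaller subspace $\aff{F_i}$ would only increase its distance, the enlarged set $\mP^{F_i}$ contains the extra vertex $u$ lying in $\aff{f}$, and the segment to $u$ produces a genuinely closer point. I expect this facet/segment construction to be the main obstacle, as it is where one must see why descending to a lower-dimensional face can decrease the affine pyramidal width.

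In the second case, $p$ lies on the relative boundary of $f$, so $p\in\operatorname{relint}(g)$ for a proper face $g$ of $f$. Since $p\in\aff{g}$ and $q\in\mP^f\subseteq\mP^g$, minimality forces $\dist{\aff{g},\mP^g}=\norm{p-q}$, so $(p,q)$ is also a nearest pair for $g$, and the first-order optimality of $q$ over the convex set $\mP^g$ gives $\innp{v,x-q}\le 0$ for all $x\in\mP^g$. Choosing a vertex $u$ of $f$ not in $g$ (so $u\in\mP^g$) and using $u-p\in$ the direction space of $\aff{f}$, orthogonality yields $\innp{v,u-q}=\innp{v,u-p}+\norm{v}^2=\norm{v}^2>0$, the desired contradiction. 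Since both cases are impossible, $p\in\operatorname{relint}(f)$.
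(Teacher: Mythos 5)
Your proof is correct, and its core — the exterior case — is the same construction as the paper's: pick a facet of $f$ whose affine hull separates $p$ from $\operatorname{relint}(f)$ (your $F_i$, the paper's $g$), take a vertex of $f$ off that facet, and move along the segment from $q$ toward that vertex to produce a point of $\mP^{F_i}$ strictly closer to $\aff{F_i}$, contradicting minimality of $f$. The differences are in execution and in the boundary case. The paper proves the key inequality $\dist{\aff{g}, q'} < \dist{p,q}$ geometrically, via a right-triangle/height comparison supported by a figure, and folds the boundary case ($p$ lying in $\aff{g}$) into that same construction; your orthogonal decomposition (writing $q = p - v$ with $v$ perpendicular to the direction space of $\aff{f}$, so the crossing point $y$ lies at distance exactly $(1-\theta)\norm{v}$ from $\aff{F_i}$ with $\theta \in (0,1)$) makes that step fully algebraic and removes any reliance on the picture. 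For the boundary case you do something genuinely different: instead of exhibiting a face of strictly smaller width, you observe that minimality of $f$ forces $(p,q)$ to also be a nearest pair for the face $g$ with $p \in \operatorname{relint}(g)$, and then contradict the first-order optimality condition $\innp{v, x - q} \le 0$ over the convex set $\mP^g$ by testing it at a vertex $u$ of $f$ off $g$, where orthogonality gives $\innp{v, u - q} = \norm{v}^2 > 0$. This variational-inequality route buys rigor exactly where the paper's figure-based argument is thinnest (the degenerate configuration $p' = p$), at the cost of a two-case structure in place of the paper's single unified construction; both routes land on a valid contradiction.
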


Indeed, having proved that the point realizing the minimum distance lies in the relative interior of the corresponding face readily implies that the two pyramidal width notions coincide.

\begin{corollary}[\textbf{Widths equivalence}]\label{cor:affinepyr_sameas_pyr}
  Let $\mP$ be a polytope. Then,
  \[
    \w{\mP} = \what_{\mP}.
    \vspace{0.3cm}
  \]
\end{corollary}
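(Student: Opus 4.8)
The plan is to prove the two inequalities $\what_{\mP} \le \w{\mP}$ and $\w{\mP} \le \what_{\mP}$ separately: the first is immediate from a set inclusion, while the second is exactly where \cref{lemma:affine_pyramidal_width} does all the work. Since that lemma is stated for full-dimensional polytopes whereas the corollary concerns a general polytope, I would first reduce to the full-dimensional case. For every proper face $f$, the sets $f$, $\aff{f}$, and $\mP^f = \conv{\verts{\mP}\setminus f}$ all lie inside the affine subspace $\aff{\mP}$, and the Euclidean distances $\dist{f,\mP^f}$ and $\dist{\aff{f},\mP^f}$ are intrinsic to $\aff{\mP}$. Hence we may replace $\R^n$ by $\aff{\mP}$, in which $\mP$ is full-dimensional, without changing either quantity, so \cref{lemma:affine_pyramidal_width} applies.

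For the easy direction, fix any $f \in \faces{\mP}$. Since $f \subseteq \aff{f}$, the distance to the larger set can only be smaller, so $\whatface{f}{\mP} = \dist{\aff{f},\mP^f} \le \dist{f,\mP^f} = \wface{f}{\mP}$. Minimizing both sides over all proper faces yields $\what_{\mP}\le\w{\mP}$.

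For the reverse direction, let $f^\ast \in \faces{\mP}$ attain the minimum affine pyramidal width, so $\what_{\mP} = \dist{\aff{f^\ast},\mP^{f^\ast}}$, and let $p \in \aff{f^\ast}$ be a point realizing this minimum distance (which exists, as we minimize a continuous distance over the compact $\mP^{f^\ast}$ against the closed affine set $\aff{f^\ast}$). By \cref{lemma:affine_pyramidal_width}, $p$ lies in the relative interior of $f^\ast$, and in particular $p \in f^\ast$. Writing $q$ for the nearest point of $\mP^{f^\ast}$, this gives $\dist{f^\ast,\mP^{f^\ast}} \le \|p-q\| = \dist{\aff{f^\ast},\mP^{f^\ast}}$; together with the trivial reverse bound $\dist{f^\ast,\mP^{f^\ast}} \ge \dist{\aff{f^\ast},\mP^{f^\ast}}$ (from $f^\ast\subseteq\aff{f^\ast}$) we obtain $\wface{f^\ast}{\mP} = \whatface{f^\ast}{\mP} = \what_{\mP}$. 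Since $\w{\mP} = \min_{f}\wface{f}{\mP} \le \wface{f^\ast}{\mP} = \what_{\mP}$, the two inequalities combine to give $\w{\mP} = \what_{\mP}$.

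The genuine content is entirely contained in \cref{lemma:affine_pyramidal_width}: once we know that the point of $\aff{f^\ast}$ closest to $\mP^{f^\ast}$ already lies in $f^\ast$ itself, the equality is forced, and everything else is bookkeeping. Accordingly, I do not expect a serious obstacle here; the only points requiring a little care are the reduction to the full-dimensional setting so that the lemma is applicable to an arbitrary polytope, and confirming attainment of the relevant minima (over the finite set of proper faces and over compact sets), both of which are routine.
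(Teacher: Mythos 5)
Your proof is correct and follows essentially the same route as the paper's: the easy inequality $\what_{\mP}\le\w{\mP}$ from $f\subseteq\aff{f}$, and the reverse inequality by applying \cref{lemma:affine_pyramidal_width} to a face attaining the minimum affine pyramidal width. The only difference is that you explicitly handle the reduction to the full-dimensional case (working inside $\aff{\mP}$) so that the lemma's hypothesis is met for an arbitrary polytope --- a point of care the paper's proof silently skips --- which is a welcome, if minor, tightening.
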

\begin{proof}
    For every face $f$ of $\mP$, we straightforwardly have $\whatface{f}{\mP}\leq \wface{f}{\mP}$ from the definition. Hence, $\what_{\mP}\leq \w{\mP}$. Now let $f$ be a face of $\mP$ with minimum affine pyramidal width. Then, by \cref{lemma:affine_pyramidal_width}, $\wface{f}{\mP}=\whatface{f}{\mP}$. Hence, $\w{\mP}=\what_{\mP}$.

\end{proof}

The pyramidal width $\pw{\mP,\mQ}$ of the Cartesian product $\mP \times \mQ$ of two polytopes is characterized as follows. In the proof, we find specific faces that realize this distance and prove its optimality, yielding an equality.

\begin{theorem}[\textbf{Pyramidal width of the product}]
\linktoproof{prop:pyrwidth_mPmQ}\label{prop:pyrwidth_mPmQ}
Let $\pw{\mP}$ and $\pw{\mQ}$ be the pyramidal widths of polytopes $\mP, \mQ \subseteq \mathbb{R}^n$. Then,
\begin{equation}\label{eq:pyrwidth_mPmQ}
\pw{\mP \times \mQ} = \sqrt{\frac{\pw[2]{\mP}\pw[2]{\mQ}}{\pw[2]{\mP} + \pw[2]{\mQ}}}\,.
    \vspace{0.3cm}
\end{equation}
\end{theorem}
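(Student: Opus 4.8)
The plan is to pass to the \emph{affine} pyramidal width throughout, since \cref{cor:affinepyr_sameas_pyr} gives $\pw{\mP}=\what_{\mP}$, $\pw{\mQ}=\what_{\mQ}$ and $\pw{\mP\times\mQ}=\what_{\mP\times\mQ}$, and the affine version behaves well under orthogonal projection. Recall that every face of $\mP\times\mQ$ is a product $F\times G$ with $F$ a face of $\mP$ and $G$ a face of $\mQ$, that $\aff{F\times G}=\aff{F}\times\aff{G}$, and that $\verts{F\times G}=\verts{F}\times\verts{G}$. I would first fix such a product face with both $F,G$ proper, set $a=\whatface{F}{\mP}$ and $b=\whatface{G}{\mQ}$, and compute $\whatface{F\times G}{\mP\times\mQ}$ exactly; the degenerate faces with one factor equal to the whole polytope are dealt with at the end.

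The first key step is to describe the complementary convex hull. Writing $V=\verts{\mP}$, $W=\verts{\mQ}$, the set $\verts{\mP\times\mQ}\setminus\verts{F\times G}$ equals $(V\setminus\verts{F})\times W$ together with $\verts{F}\times(W\setminus\verts{G})$. Using $\conv{S\times T}=\conv{S}\times\conv{T}$, these two pieces have convex hulls $\mP^F\times\mQ$ and $F\times\mQ^G$, so the full convex hull is the join $\{\,s(u,r)+(1-s)(u',r'):(u,r)\in\mP^F\times\mQ,\ (u',r')\in F\times\mQ^G,\ s\in[0,1]\,\}$. Because the squared Euclidean norm splits across the two blocks and, for a fixed split $s$, the block variables are independent, the distance from $\aff{F}\times\aff{G}$ to this set decouples into two one-block minimizations tied together only through $s$.

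The heart of the argument is these two block minimizations. Projection onto $\aff{F}$ is affine, so for $u'\in F\subseteq\aff{F}$ one gets $\dist{su+(1-s)u',\aff{F}}=s\,\dist{u,\aff{F}}$, and minimizing over $u\in\mP^F$ leaves exactly $s\cdot a$; note that $u'$ drops out precisely because it already lies in $\aff{F}$ (this is where the affine width, rather than the plain one, is essential). In the second block the roles are reversed: the full factor $\mQ$ carries weight $s$ and its representative may be chosen inside $G\subseteq\aff{G}$, contributing zero, while the $\mQ^G$ factor carries weight $1-s$ and yields $(1-s)\cdot b$. Hence $\whatface{F\times G}{\mP\times\mQ}=\min_{s\in[0,1]}\sqrt{s^2a^2+(1-s)^2b^2}$, and an elementary optimization (optimal split $s=b^2/(a^2+b^2)$) gives $\whatface{F\times G}{\mP\times\mQ}=ab/\sqrt{a^2+b^2}$.

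It remains to minimize over all faces. The function $\phi(a,b)=ab/\sqrt{a^2+b^2}$ is strictly increasing in each argument, so the two face-minimizations decouple and $\min_{F,G}\phi(\whatface{F}{\mP},\whatface{G}{\mQ})=\phi(\what_{\mP},\what_{\mQ})$. For a degenerate face with, say, $G=\mQ$, the set $\mQ^G$ is empty, forcing $s=1$ and a distance of $\whatface{F}{\mP}\geq\what_{\mP}\geq\phi(\what_{\mP},\what_{\mQ})$, so such faces never beat the genuine product faces. Combining and invoking \cref{cor:affinepyr_sameas_pyr} gives $\pw{\mP\times\mQ}=\phi(\pw{\mP},\pw{\mQ})=\sqrt{\pw[2]{\mP}\pw[2]{\mQ}/(\pw[2]{\mP}+\pw[2]{\mQ})}$. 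The main obstacle, and the step most prone to error, is the convex-hull characterization together with the asymmetric weight bookkeeping across the two blocks: one must correctly see that the ``hard'' co-face factor carries weight $s$ in the first block but weight $1-s$ in the second, which is exactly what produces the $s^2a^2+(1-s)^2b^2$ trade-off and, after optimizing $s$, the additive-in-inverse-squares combination of the two widths.
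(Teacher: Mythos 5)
Your architecture is sound, and two of its three pieces are correct and complete: the description of $(\mP\times\mQ)^{F\times G}$ as the join of $\mP^F\times\mQ$ and $F\times\mQ^G$, and the upper bound, whose witness with optimal split $s=b^2/(a^2+b^2)$ is exactly the paper's point $q=(p_1,q_2)\frac{\delta_1^2}{\delta_1^2+\delta_2^2}+(q_1,p_2)\frac{\delta_2^2}{\delta_1^2+\delta_2^2}$; your explicit treatment of the degenerate faces $F\times\mQ$ is in fact more careful than the paper's. The genuine gap is in the second block of your ``decoupled'' minimization, and it sits precisely on the hard half of the theorem (the lower bound). You claim $\min_{r\in\mQ,\;r'\in\mQ^G}\dist{sr+(1-s)r',\aff{G}}=(1-s)b$, but your justification (``the representative of $\mQ$ may be chosen inside $G$'') only proves the inequality ``$\le$''. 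The reverse inequality is not automatic and is not analogous to block one: there, $u'\in F\subseteq\aff{F}$ forces its residual to vanish identically, so the identity $\dist{su+(1-s)u',\aff{F}}=s\,\dist{u,\aff{F}}$ holds for \emph{all} choices. In block two, $r$ ranges over all of $\mQ$, which is not contained in $\aff{G}$, and when $G$ is not a facet the residuals $r-\proj_{\aff{G}}(r)$ of points of $\mQ$ can point in many directions orthogonal to $\aff{G}$; a priori $s\bigl(r-\proj_{\aff{G}}(r)\bigr)$ could partially cancel $(1-s)\bigl(r'-\proj_{\aff{G}}(r')\bigr)$ and bring the point strictly closer to $\aff{G}$ than $(1-s)b$. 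Since the theorem is an equality and the ``$\le$'' half is easy, this unproven ``$\ge$'' is where the entire content of the result lives, so as written the proposal does not establish the lower bound.

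The gap is fillable inside your framework with one more convexity step: since $\mQ=\conv{\verts{G}\cup\verts{\mQ^G}}$, write $r=\alpha g+(1-\alpha)q$ with $g\in G$, $q\in\mQ^G$, $\alpha\in[0,1]$; then $sr+(1-s)r'=s\alpha\,g+(1-s\alpha)\,\tilde q$ where $\tilde q\in\mQ^G$ is a convex combination of $q$ and $r'$, and now your affine-projection identity applies (the $g$-part lies in $\aff{G}$ and drops out), giving $\dist{sr+(1-s)r',\aff{G}}=(1-s\alpha)\dist{\tilde q,\aff{G}}\ge(1-s)b$. With that inserted, your proof is correct and constitutes a genuinely different, more elementary route than the paper's: the paper never describes the complementary polytope as a join, but instead invokes the Hyperplane Separation Theorem in each factor and assembles a unit-gradient affine functional $\pi$ on the product that equals $1/\sqrt{\what_1^{-2}+\what_2^{-2}}$ on $\aff{f_1}\times\aff{f_2}$ and is nonpositive on every vertex of the complementary polytope; the cancellation issue you glossed over is exactly what that functional argument dispatches for free. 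Your route buys a purely geometric picture and an exact per-face formula $\whatface{F\times G}{\mP\times\mQ}=ab/\sqrt{a^2+b^2}$, at the price of the convex-combination bookkeeping above.
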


Note that \cref{prop:pyrwidth_mPmQ} directly implies a bound $\delta_\mathcal{P} = \bigomega{\frac{1}{\sqrt{k}}\min_{i\in[k]} \delta_{\mathcal{P}_i}}$ for the pyramidal width of the product polytope $\mathcal{P} = \prod_{i\in[k]} \mathcal{P}_i$ if $\delta_{\mathcal{P}_i}$ is the pyramidal width of $\mP_i$, see \cref{cor:pyr_width_lb_for_k_polytopes}. Compare this lower bound with the trivial upper bound $\pw{\mP} \leq \min_{i\in[k]}\{\pw{\mP}_i\}$: the upper bound is essentially tight when one pyramidal width is much smaller than the others.





We also derive rates that rely on the vertex-facet distance, which is defined as in the following, where now the vertices of the polytope play a crucial role.

\begin{definition}[\textbf{Vertex-facet distance \cite{PR18}}]\label{def:vfd}
Let $\mP$ be a nonempty polytope with at least two vertices. We define the vertex-facet distance of $\mP$ as
    \vspace{0.1cm}
\[
\vf{\mP} \defi \min_{F_\mP \in \facets{\mP}} \dist{\aff{F_\mP}, \verts{\mP}\setminus F_\mP},
\]
    \vspace{0.1cm}
where $\verts{\mP}$ is the set of vertices of $\mP$.
\end{definition}

The fact that this polytope condition number restricts to distances between facets and vertices makes it behave better for product polytopes. In particular, we do not get a polynomial factor decrease with respect to the minimum condition number, as it was the case for the pyramidal width.

\begin{proposition}[\textbf{Vertex-facet distance of the product}]\linktoproof{prop:vfd_PxQ_bounds}\label{prop:vfd_PxQ_bounds}
Let $\mP \defi \mP_1 \times \cdots \times \mP_k$ be a product polytope. We have $\vf{\mP} = \min_{i \in[k]}\left\{\vf{\mP_i}\right\}$.
\end{proposition}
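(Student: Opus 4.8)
The plan is to reduce the product vertex-facet distance to a blockwise computation, using two structural facts about Cartesian products: the description of the facets of a product polytope, and the product decomposition of both affine hulls and Euclidean distances.

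First I would characterize the facets of $\mP = \mP_1 \times \cdots \times \mP_k$. Since every nonempty face of a product is a product $f_1 \times \cdots \times f_k$ of faces of the factors, and dimensions add, a facet, having dimension $\dim(\mP) - 1 = \big(\sum_i \dim(\mP_i)\big) - 1$, must have exactly one factor $f_i$ equal to a facet $F_i$ of $\mP_i$ and all remaining factors $f_j = \mP_j$. Conversely each such product is a face of codimension one, hence a facet. This yields a bijection between $\facets{\mP}$ and pairs $(i, F_i)$ with $F_i \in \facets{\mP_i}$, where
\[
F_\mP = \mP_1 \times \cdots \times \mP_{i-1} \times F_i \times \mP_{i+1} \times \cdots \times \mP_k.
\]

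Next I would fix such a facet and evaluate $\dist{\aff{F_\mP}, \verts{\mP}\setminus F_\mP}$. Using $\aff{A \times B} = \aff{A} \times \aff{B}$, we have $\aff{F_\mP} = \aff{\mP_1} \times \cdots \times \aff{F_i} \times \cdots \times \aff{\mP_k}$. Because this affine subspace is a Cartesian product aligned with the block decomposition of the ambient space, the squared distance of a point $x = (x^1, \ldots, x^k)$ to it is the sum of the squared distances of its blocks to the corresponding factor subspaces; for $j \neq i$ the block of a vertex satisfies $x^j \in \mP_j \subseteq \aff{\mP_j}$ and contributes zero, so $\dist{x, \aff{F_\mP}} = \dist{x^i, \aff{F_i}}$. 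Since $\verts{\mP} = \verts{\mP_1} \times \cdots \times \verts{\mP_k}$, a vertex $v$ lies outside $F_\mP$ exactly when $v^i \in \verts{\mP_i} \setminus F_i$ (the other blocks being unconstrained), and the distance depends only on $v^i$; minimizing yields
\[
\dist{\aff{F_\mP}, \verts{\mP}\setminus F_\mP} = \min_{v^i \in \verts{\mP_i}\setminus F_i} \dist{v^i, \aff{F_i}} = \dist{\aff{F_i}, \verts{\mP_i}\setminus F_i}.
\]
Taking the minimum over all facets of $\mP$ and invoking the bijection from the first step then gives $\vf{\mP} = \min_{i\in[k]} \min_{F_i\in\facets{\mP_i}} \dist{\aff{F_i}, \verts{\mP_i}\setminus F_i} = \min_{i\in[k]} \vf{\mP_i}$, as claimed.

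I expect the main obstacle to be the careful justification of the two structural reductions: identifying the facets of the product as exactly the ``one facet factor, all others full'' products (the dimension count together with the converse), and the blockwise splitting of the distance to $\aff{F_\mP}$, which relies on the product subspace being orthogonally aligned with the blocks. Once these are established the remaining minimization is immediate. Throughout I would assume each factor is a genuine polytope with at least two vertices, so that every $\vf{\mP_i}$ is defined and each $\verts{\mP_i}\setminus F_i$ is nonempty (a proper face cannot contain all vertices).
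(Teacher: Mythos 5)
Your proof is correct and takes essentially the same approach as the paper's: characterize the facets of the product as those with exactly one facet factor and all other factors full, use $\aff{A \times B} = \aff{A} \times \aff{B}$ to split the Euclidean distance blockwise so that the full factors contribute zero, and then minimize over facets. The only cosmetic differences are that you handle general $k$ directly with an explicit dimension count, whereas the paper argues for $k=2$ (with the general case following by iteration) and simply asserts the facet characterization.
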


An interesting fact, proven in \cite[Theorem 2.22]{RS22},  is that for a polytope $\mP$ with at least two vertices, the following inequality holds: $\vf{\mP} \ge \pw{\mP}$.


%
%
%
%
%
%
%
%
%
%
\subsection{Linear Convergence}\label{s:convergence}

In this section, we examine the convergence behavior of the \AFW{} algorithm, when is run to minimize a \PL{} objective over a product polytope.
The following quantifies the linear rates of convergence that are obtained depending on the pyramidal widths or the vertex-facet distances of the individual polytopes in the product.

\begin{proposition}[\textbf{Linear rates via the product's pyramidal width}]
\label{prop:linconv_pw}\linktoproof{prop:linconv_pw}
For $i\in[k]$, let $\mP_i$ be a polytope with pyramidal width $\pw{\mP_i}$.
The \AFW{} algorithm obtains an $\epsilon$-minimizer of a convex, $L$-smooth and $\mu$-\PL{} objective over $\Pi_{i \in [k]}\mP_i$ in at most
    \vspace{0.2cm}
\[
t= \bigol{\frac{L\diam{\mP}^2}{\mu \alpha_{\pw{}}^2} \log(\frac{L\diam[2]{\mP}}{\epsilon})}
    \vspace{0.3cm}
\]
iterations, where $\diam[2]{\mP}$ is the diameter of $\Pi_{i \in [k]}\mP_i$, and $\alpha_{\pw{}} \defi \frac{1}{\sqrt{2}^{\ceil{\log_2 (k+1)}}}\min_{i \in [k]} \{\pw{\mP_i}\}$.
%
\end{proposition}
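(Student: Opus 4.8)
The plan is to combine three ingredients that the paper has already set up: the linear convergence theory of \AFW{} under the three premises (positive polytope condition number, objective growth via \PL{}/\QG{}, and smoothness for descent), the pyramidal-width lower bound for the product polytope, and a bookkeeping of how the resulting condition number enters the iteration complexity. Concretely, I would first invoke the known linear-rate guarantee for \AFW{} on a single polytope with a positive pyramidal width $\delta$, which produces a per-iteration geometric decrease of the primal gap of the form $f(x_{t+1}) - f^\ast \le (1 - \rho)(f(x_t) - f^\ast)$ with contraction factor controlled by a quantity like $\rho = \Theta\!\left(\frac{\mu\,\delta^2}{L\,\diam{\mP}^2}\right)$. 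This is the statement referenced in the excerpt as the alignment/scaling inequality (cf. \cref{thm:scaling_ineq} and \cref{a:linconv_pw}); I would apply it with $\mP$ taken to be the full product polytope $\Pi_{i\in[k]}\mP_i$, whose diameter is $\diam{\mP}$.

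Second, I would substitute the pyramidal width of the product into that rate. By \cref{cor:pyr_width_lb_for_k_polytopes}, iterating \cref{prop:pyrwidth_mPmQ} over the $k$ factors gives a lower bound $\pw{\mP} \ge \alpha_{\pw{}} = \frac{1}{\sqrt{2}^{\ceil{\log_2(k+1)}}}\min_{i\in[k]}\{\pw{\mP_i}\}$; the precise $\sqrt{2}$-power factor is exactly what emerges from combining polytopes pairwise in a balanced binary tree of depth $\ceil{\log_2(k+1)}$, where each merge replaces two widths $\delta_1,\delta_2$ by $\delta_1\delta_2/\sqrt{\delta_1^2+\delta_2^2} \ge \frac{1}{\sqrt2}\min\{\delta_1,\delta_2\}$. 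Because the single-polytope rate depends on the condition number only through $\delta^2$, replacing $\delta$ by the lower bound $\alpha_{\pw{}}$ turns the contraction factor into $\rho = \Theta\!\left(\frac{\mu\,\alpha_{\pw{}}^2}{L\,\diam{\mP}^2}\right)$, which only weakens (decreases) the guaranteed $\rho$ and hence keeps the bound valid.

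Third, I would convert the geometric decrease into an iteration count. Since $f(x_t) - f^\ast \le (1-\rho)^t\,(f(x_0) - f^\ast)$, reaching an $\epsilon$-minimizer requires $t = \bigol{\frac{1}{\rho}\log\!\frac{f(x_0)-f^\ast}{\epsilon}}$ iterations. Plugging in $\rho$ yields the leading factor $\frac{L\,\diam{\mP}^2}{\mu\,\alpha_{\pw{}}^2}$. For the logarithmic term, I would bound the initial primal gap $f(x_0) - f^\ast = \bigol{L\,\diam[2]{\mP}}$ using $L$-smoothness together with the fact that $x_0$ and $x^\ast$ both lie in $\mP$, so $\|x_0 - x^\ast\| \le \diam{\mP}$ and $f(x_0) - f^\ast \le \frac{L}{2}\diam[2]{\mP}$ (using $\nabla f(x^\ast)$ vanishing on the relevant subspace, or simply absorbing the gradient term into the smoothness estimate up to constants). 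This gives the $\log\!\left(\frac{L\diam[2]{\mP}}{\epsilon}\right)$ factor and completes the stated bound.

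The main obstacle is not the algebra but ensuring that the single-polytope \AFW{} rate is invoked with exactly the right constants and that the \QG{}/\PL{} premise feeds correctly into the contraction factor. The delicate point is that the classical \AFW{} analysis often assumes strong convexity, whereas here we have only $\mu$-\PL{} (equivalently $\mu$-\QG{} under convexity, cf. \citep{KNS16}); I would need to verify that the version of the rate I cite is the one established for \QG{}/\PL{} objectives rather than strongly convex ones, so that the factor $\mu$ appears as claimed. A secondary subtlety is the accounting of ``bad'' (drop/swap) steps in \AFW{}, which classically at most doubles the iteration count and therefore only affects the $\bigol{\cdot}$ constant; I would note this explicitly so the geometric-decrease-per-good-step argument translates cleanly into the total iteration bound without changing the order.
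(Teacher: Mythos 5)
Your proposal is correct and takes essentially the same route as the paper: the paper likewise derives a per-iteration contraction of the form $h_{t+1} \le \bigl(1 - \tfrac{\mu\,\pw[2]{\mP}}{16 L \diam[2]{\mP}}\bigr) h_t$ for \AFW{} by adapting \cite[Theorem 2.29]{BCC+23} from strong convexity to the \PL{}/\QG{} setting (via \cref{lemma:geom_pl} and \cref{lemma:d_bound}), handles drop steps with the same at-most-half-the-iterations argument, bounds the initial gap by $L\diam[2]{\mP}/2$, and finally substitutes $\pw{\mP} \ge \alpha_{\pw{}}$ from \cref{cor:pyr_width_lb_for_k_polytopes}. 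The only difference is presentational: you invoke the \PL{}-version of the single-polytope rate as a known result (correctly flagging that it must be the \PL{} rather than strongly convex variant, which is exactly the adaptation the paper writes out in full for completeness), so your outline and the paper's proof contain the same ingredients in the same order.
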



We quantify linear convergence rates by using the composition of vertex-facet distances in the product by making use of the analysis in \cite{BS15}. The authors did not state their results for \PL{} functions, but their proofs work in this case.

\begin{proposition}[\textbf{Linear rates via the product's vertex-facet distance}]
\label{prop:linconv_vf}\linktoproof{prop:linconv_vf}
For $i\in[k]$, let $\mP_i$ be a polytope with vertex-facet distance $\vf{\mP_i}$.
The \AFW{} algorithm obtains an $\epsilon$-minimizer of a convex, $L$-smooth,  $\mu$-\PL{} objective $f(x)$ over $\Pi_{i \in [k]}\mP_i$
    in at most
    \vspace{0.2cm}
\[
t= O\left(\min\left\{ \frac{32L\diam{\mP}^2N^2}{\mu \alpha_{\vf{}}^2}, 4 \right\}\log(\frac{f(x_0)-f(x^\ast)}{\epsilon})\right)
\]
    \vspace{0.2cm}
iterations
where $N$ is a constant associated with a procedure performed at the end of each iteration of the \AFW{} algorithm, described in \cref{a:algos}.
%
\end{proposition}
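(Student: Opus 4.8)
The plan is to import the away-step conditional gradient convergence analysis of \cite{BS15} and make two adjustments: replace their strong-convexity hypothesis by the \QG{} condition, and substitute the product's vertex-facet distance via \cref{prop:vfd_PxQ_bounds}. Linear convergence of \AFW{} rests on the three premises recalled in \cref{s:literature}: smoothness for descent, a growth condition from the minimizers, and a positive polytope condition number. Smoothness is assumed (\cref{def:conv_smooth}); the growth condition is furnished by the equivalence under convexity between $\mu$-\PL{} and $\mu$-\QG{} (cf. \cref{def:pl}), which gives a quadratic lower bound on the primal gap $f(x_t) - f^\ast$ in terms of $\min_{x^\ast \in \Omega^\ast}\norm{x_t - x^\ast}^2$; and the condition number we use is the vertex-facet distance.

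First I would derive the per-iteration contraction. At each ``good'' iteration, the scaling inequality (\cref{thm:scaling_ineq}) lower-bounds the alignment of the gradient with the direction returned by the \LMO{} by the product's vertex-facet distance times the alignment with the idealized direction $x_t - x^\ast$. Combining this with $L$-smoothness, which guarantees descent proportional to the squared gap, and with the \QG{} bound, which converts $\norm{x_t - x^\ast}$ into $f(x_t) - f^\ast$, yields a geometric contraction $f(x_{t+1}) - f^\ast \le (1 - \rho)(f(x_t) - f^\ast)$ with $\rho = \bigomega{\mu\,\vf[2]{\mP}/(L\,\diam[2]{\mP}\,N^2)}$. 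By \cref{prop:vfd_PxQ_bounds}, the product's vertex-facet distance equals $\min_{i \in [k]}\vf{\mP_i} = \alpha_{\vf{}}$, with no $\sqrt{k}$ loss---in contrast to the pyramidal-width case---so this is the quantity entering $\rho$. Summing the geometric decrease and separately accounting for the bounded number of ``drop'' (away) steps, which are not guaranteed to make progress but whose count is controlled, as in \cite{BS15}, by the procedure of parameter $N$ executed at the end of each iteration (cf. \cref{a:algos}), gives the iteration count to reach $f(x_t) - f^\ast \le \epsilon$; the $\min$ inside the stated bound reflects the corresponding case split in their analysis.

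The main obstacle is confirming that the \cite{BS15} argument, stated there under strong convexity, survives under the weaker \QG{} consequence of \PL{}. The crux is that strong convexity is invoked in their proof only to lower-bound $f(x_t) - f^\ast$ by a multiple of $\norm{x_t - x^\ast}^2$; since this is exactly the content of $\mu$-\QG{}, every inequality in their chain is preserved. A secondary point is to check that the drop-step bookkeeping and the constant $N$ are unaffected by the product structure, which holds because the \LMO{} over $\Pi_{i \in [k]}\mP_i$ factors as the product of the component oracles and the active set decomposes blockwise, so the combinatorial quantities controlling the non-progress steps are inherited from the components.
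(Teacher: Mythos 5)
Your proposal is correct and takes essentially the same route as the paper: invoke the away-step analysis of \cite{BS15} (their Theorem 3.1), supply the growth condition through the convexity-equivalence of $\mu$-\PL{} and $\mu$-\QG{}, and substitute $\alpha_{\vf{}} = \min_{i\in[k]}\vf{\mP_i}$ via \cref{prop:vfd_PxQ_bounds}. The only inaccuracy is your premise that \cite{BS15} assumes strong convexity: their theorem is already stated under an error-bound condition (their Lemma 2.5, a Hoffman-type inequality that is essentially \QG{}), so the adaptation you flag as the main obstacle is immediate --- one just identifies their error-bound constant $\kappa$ as $2/\mu$ in the convex \PL{} case, exactly as the paper does.
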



%
%
%
%
%
%
%
%
%
%
%
%
%
%
%
%
%
%
%
%
\section{The approximate feasibility problem for a polytope intersection}\label{s:polytope_intersection}

\Dav{Slightly mention some context here although the main related work should go in the related work section}

In this section, we present the objective that we can optimize in order to approximately solve the feasibility problem \eqref{prob:cfp} for polytopes. We have a variable for each polytope component and we penalize the variables being far from each other.
    \vspace{0.3cm}
\begin{equation}\label{prob:intersection_k}
    f(x) \defi \frac{1}{2k} \sum_{i=1}^{k-1} \sum_{j=i+1}^k \|x^i - x^j\|^2\, \text{ for } x \in \mP,
    \vspace{0.3cm}
\end{equation}
where $\mP \defi \Pi_{i \in [k]} \mP_i$, and each $\mP_i$ is a polytope of dimension $n$ so $x = (x^1, x^2, \ldots, x^k) \in \R^{nk}$.
Note that $f(x) = \frac{1}{2k} \innp{M_k x, x}$, with $M_k \defi (k I_{k\times k} - \ones_{k} \ones_{k}^T) \otimes I_n$.


Now we establish several properties of the objective function of \cref{prob:intersection_k} that show that we can apply the theory in \cref{s:premises} to determine linear convergence rates for our application.

\begin{proposition}[\textbf{Convexity, smoothness, and PL of \eqref{prob:intersection_k}}]\linktoproof{prop:lsmooth_cvx_pl_intersection_k}\label{prop:lsmooth_cvx_pl_intersection_k}
    For $i \in [k]$, let $\mP_i \subseteq \R^n$ be convex sets and $\mP \defi \Pi_{i \in [k]} \mP_i$.
    The function $f(x) \defi \frac{1}{2k} \sum_{i=1}^{k-1} \sum_{j = i+1}^k\|x^i -x^j\|^2$ in \eqref{prob:intersection_k} is convex, $1$-smooth, and  $1$-\PL{} over $\mP$.
\end{proposition}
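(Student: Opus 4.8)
The plan is to reduce every claim to the spectrum of the matrix $M_k = (kI_k - \ones_k\ones_k^T)\otimes I_n$, using the quadratic representation $f(x) = \frac{1}{2k}\innp{M_k x, x}$ already recorded after \eqref{prob:intersection_k}. Since $M_k$ is symmetric, the gradient and Hessian are $\nabla f(x) = \frac{1}{k} M_k x$ and $\nabla^2 f(x) = \frac{1}{k} M_k$, so convexity, smoothness, and the PL constant can all be read off from the eigenvalues of $\frac{1}{k}M_k$.

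First I would compute the spectrum of $M_k$. The factor $kI_k - \ones_k\ones_k^T$ is the Laplacian of the complete graph $K_k$: it annihilates $\ones_k$ and acts as multiplication by $k$ on the orthogonal complement $\ones_k^\perp$, so its eigenvalues are $0$ (once) and $k$ (with multiplicity $k-1$). Tensoring with $I_n$ preserves eigenvalues, hence $M_k$ has eigenvalues $0$ (multiplicity $n$) and $k$ (multiplicity $n(k-1)$). Consequently $\nabla^2 f = \frac{1}{k}M_k \succeq 0$, which yields convexity, and its largest eigenvalue is $1$, which yields $L$-smoothness with $L=1$ (equivalently, $\norm{\nabla f(x)-\nabla f(y)} = \frac{1}{k}\norm{M_k(x-y)} \le \norm{x-y}$).

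For the PL inequality, the key observation is the exact identity
\[
\tfrac12\norm{\nabla f(x)}^2 = \tfrac{1}{2k^2}\innp{M_k^2 x, x} = \tfrac{1}{2k}\innp{M_k x, x} = f(x)\qquad\text{for all } x\in\R^n,
\]
which holds because $M_k^2 = k M_k$ (the only nonzero eigenvalue of $M_k$ is $k$; indeed $(kI_k-\ones_k\ones_k^T)^2 = k(kI_k-\ones_k\ones_k^T)$). The main subtlety is that \cref{def:pl} measures growth from the \emph{constrained} minimum $f^\ast = \min_{x\in\mP} f$, which is strictly positive exactly when $\bigcap_i \mP_i = \emptyset$. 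This is resolved by noting that $f$ is a nonnegative function (a sum of squared norms), so $f^\ast \ge 0$; combining this with the identity above gives, for every $x \in \mP$,
\[
\tfrac12\norm{\nabla f(x)}^2 = f(x) \ge f(x) - f^\ast,
\]
which is precisely the $1$-PL inequality. Hence $f$ is convex, $1$-smooth, and $1$-PL over $\mP$, and the values $\mu = L = 1$ feed directly into the rates of \cref{prop:linconv_pw,prop:linconv_vf}.

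I expect the only genuine obstacle to be the bookkeeping around $f^\ast$: one must be careful that the PL condition is relative to the constrained optimum rather than the unconstrained value $0$, and that a single argument covers both the feasible case ($f^\ast = 0$) and the infeasible case ($f^\ast > 0$). The nonnegativity of $f$, together with the clean identity $\tfrac12\norm{\nabla f(x)}^2 = f(x)$, is exactly what makes one argument suffice in both regimes.
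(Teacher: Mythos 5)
Your proof is correct and follows essentially the same route as the paper's: both reduce everything to the quadratic form $f(x)=\frac{1}{2k}\innp{M_k x,x}$, read off convexity and $L=1$ from the spectrum of $\frac{1}{k}M_k$ (your graph-Laplacian identification of $kI_k-\ones_k\ones_k^T$ is just a cleaner name for the paper's rank-one computation), and establish $1$-PL via the identity $M_k^2=kM_k$, giving $\frac12\norm{\nabla f(x)}^2=f(x)\ge f(x)-f^\ast$ since $f^\ast\ge 0$. The only cosmetic difference is that the paper argues convexity from nonnegativity of the quadratic form rather than from the eigenvalues, and it explicitly notes that $\Omega^\ast\neq\emptyset$ by compactness of $\mP$, a point you assume implicitly.
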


We note that due to the $1$-PL property and convexity, our objective also satisfies the $1$-quadratic growth condition. Thus, if the intersection of polytopes is non-empty, the optimum value for the objective in \eqref{prob:intersection_k} is $0$, and reaching an $\frac{\epsilon}{2k}$ minimizer of $f$ implies that each of our $k$ variables is an $\epsilon$-approximate feasible point. Note that this factor appears in a logarithmic factor in the convergence rates.

On the other hand, if the intersection is empty, we can run the algorithm until we observe that the function value is larger than a primal-dual gap that is computed by the algorithm and is decreasing with linear rates, which guarantees that the optimal value is strictly positive, cf. \cref{a:algos}.

\dav{We should actually be more explicit about the primal-dual measure, and in fact for the vertex facet case we did not dothe primal-dual proof. The one of the pyramidal width is sketched in  \citep{martinez2025beyond}. It should be done after the deadline and it is also a nice addition to the theory of the paper.}
\section{Experiments}\label{s:experiments}

In this section, we evaluate several algorithms for solving the problem in \cref{prob:intersection_k} on a product polytope, showcasing their practical performance and the differences among the explored options.
Our experiments compare the \FW{} and \AFW{} algorithms, cf. \cref{a:algos}, with different heuristic variants.

We implement two types of algorithms: ``full'' (F-\FW{} and F-\AFW{}), that at each iteration computes a single \LMO{} over the entire product polytope $\mP$ and maintains a single \emph{active set} in the product, which is a set of previously computed extreme points stored such that the current iterate can be written as a convex combination of such points,
and ``cyclic block-coordinate'' (C-BC), where at each iteration we only compute the \LMO{} of one component and update the corresponding variable and active set only.


We observe that the full \AFW{} variant enjoys the best performance among the tested algorithms.
At each iteration $t$, F-\AFW{} optimizes over the active set, to find an away vertex, and updates the set, see \cref{algo:AFW}.
An away vertex $a_t$ allows the algorithm to choose to move in the direction guaranteeing more progress, namely, taking an \emph{away step} from $x_t$ in the direction $x_t - a_t$, or taking a \emph{\FW{} step} from $x_t$ towards the \LMO{} solution.
This not only accelerates the algorithm in practice in high-dimensional settings, but it is key for our linear convergence analysis, as \cref{fig:k2n20000,fig:k10n10000} show.
All algorithms were run with a standard line search, see \cref{a:algos}.

\begin{figure}[ht!]
  \centering
  \hspace*{-0.075\textwidth} 
  \begin{adjustbox}{center}
    \includegraphics[width=1.15\textwidth]{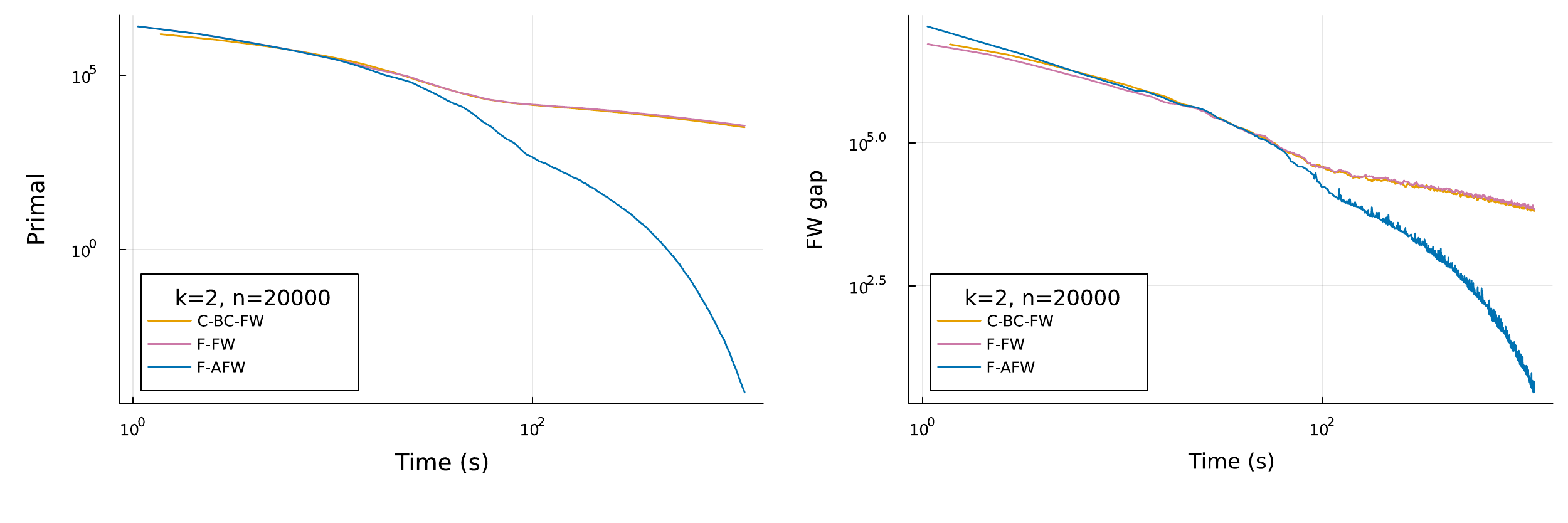}
  \end{adjustbox}
\caption{Different variants of the FW algorithm (C-BC-FW, F-FW, F-AFW) run for up to $1000$ iterations on two non‑intersecting polytopes in $\R^{20000}$. }
\label{fig:k2n20000}
\end{figure}

Here we report results for settings with $k = 2, 10$ polytopes in $\R^n$, with up to $n = 20000$. We present more plots with different settings in \cref{app:extra_experiments}.
Each instance is constructed by generating $k$ polytopes. We generate two kinds of problems: one where the polytopes do not intersect, and another where they do. In the first case, each polytope is generated  as the convex hull of a list of points that are sampled uniformly at random from mutually disjoint intervals for every coordinate $i \in [n]$. In the latter case, we shift the aforementioned polytopes $\mP_1, \mP_2, \dots, \mP_k$ so they
have non-empty intersection. Concretely, we shift $\mP_2$ until one of its vertices coincides with a vertex $v$ of $\mP_1$; we then move $\mP_2$ towards the barycentre of $\mP_1$ (the average of its vertices); finally, we translate all remaining polytopes so they share $v$.

The convergence happens to be faster across all algorithms for the non-intersecting setting. Compare \cref{fig:k2n10000_intersecting}, reporting algorithmic performances on the same polytopes used in \cref{fig:k2n20000} but made to intersect, and see the experiments in \cref{app:extra_experiments}, where we showcase more problems with non-empty intersection.

\begin{figure}[ht!]
  \centering
  \hspace*{-0.075\textwidth} 
  \begin{adjustbox}{center}
    \includegraphics[width=1.15\textwidth]{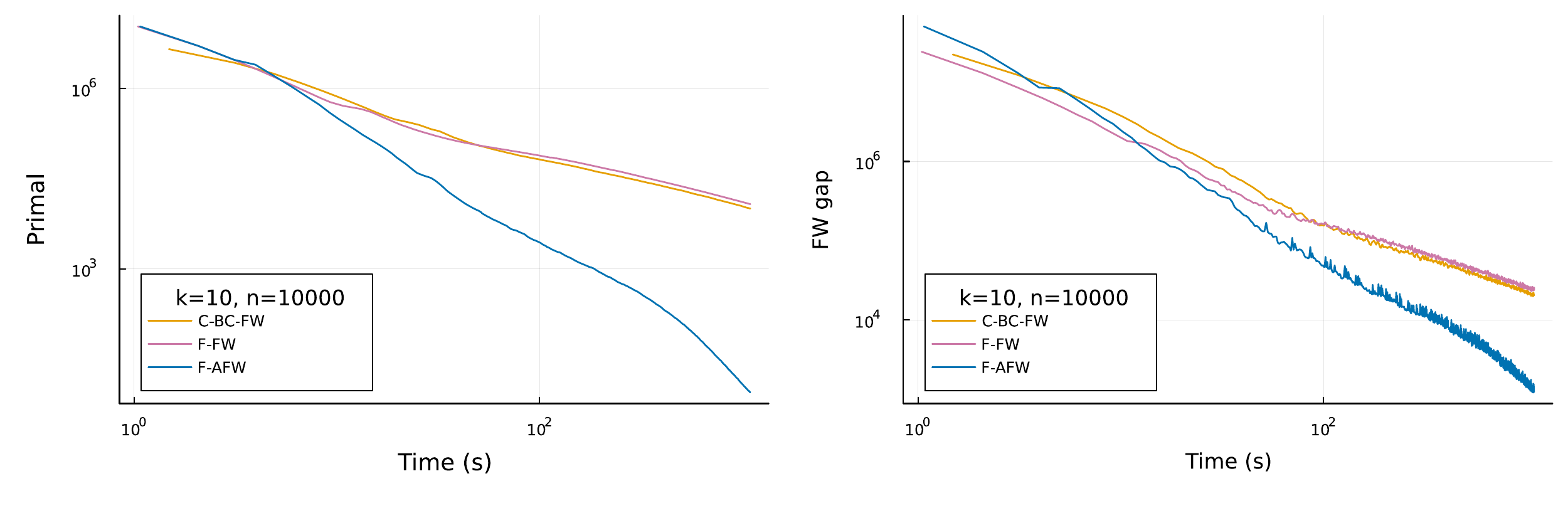}
  \end{adjustbox}
    \caption{Different variants of the FW algorithm (C-BC-FW, F-FW, F-AFW) run for up to $1000$ iterations on ten non‑intersecting polytopes in $\R^{10000}$. One can observe that the rates of convergence for the objective are at least linear in F-AFW.}
\label{fig:k10n10000}
\end{figure}

For each polytope, we draw the number of points uniformly at random in $[n, 2n]$. This choice keeps each polytope ``reasonably challenging'' to optimize over.
In \cref{app:extra_experiments}, we also report an example with two disjoint polytopes in $\R^{10,000}$ built from $100,000$ vertices, an order of magnitude larger: the convergence behavior of the tested algorithms in that instance is essentially unchanged, and only the absolute time to convergence increases, reflecting a higher cost for the \LMO{}.

In the plots, we report the following metrics to quantify algorithmic performance over iterations: the primal gap $f(x_t)-\min_{x\in\mP}f(x)$, and also the so-called \FW{} gap $\max_{x \in \mathcal{P}} \innp{\nabla f(x_t), x_t - x}$ \citep{BCC+23}. While the former is generally not computable, since one does not usually know the minimum objective value a priori, the latter is a standard measure for \FW{} algorithms, since it is an upper bound on the former while at the same time it is computable. Therefore, the \FW{} gap results in a stopping criterion that can be used to guarantee $\epsilon$-optimality has been reached. We used the FW gap as a stopping criterion in our experiments, if they already converged, cf. \cite{Jag13}. We ran every \FW{} variant up to a maximum of $1000$ iterations or until the \FW{} gap dropped below $1e-07$, whichever occurred first.
Both metrics in the figures below are plotted against
the elapsed computational time, in seconds.

The implementation was carried out fully in Julia (Version 1.9.4) and, in particular, we relied on the \href{https://github.com/ZIB-IOL/FrankWolfe.jl}{\texttt{FrankWolfe.jl}} package (Version 0.5.0, MIT License) for the algorithmic part.
All runs were executed on Slurm compute nodes equipped with Intel Xeon Gold 6342 processors, each providing 96 logical cores, and approximately 512GiB RAM, interconnected via Infiniband.
We requested $k$ cores per task in Slurm via \texttt{\#SBATCH\ --cpus-per-task=k}, and then set \texttt{export JULIA\_NUM\_THREADS=\$SLURM\_CPUS\_PER\_TASK} so that Julia’s \FW{} routines used exactly $k$ threads.
The code for reproducing the results in this section will be made publicly available upon publication and can be found in the supplementary material.

%


All in all, we found a consistent hierarchy across all experimental settings.
Even as the number of polytopes grows, \AFW{} maintains a clear edge over the other variants, although this advantage gradually diminishes, particularly in terms of the measured \FW{} gap.
Instead, the algorithmic performance of the tested \FW{}-based variants F-\FW{} and C-BC-\FW{} is very similar, and it yields worse convergence than F-\AFW{}.
Surprisingly, on our instances, the cyclic block‑coordinate \FW{} strategy offers no practical gain over the full \FW{} variant, even with large $k$.
For the case of intersecting polytopes in \cref{fig:k2n10000_intersecting} and others in \cref{app:extra_experiments}, the problem becomes easier and the difference among algorithms becomes narrower.

\begin{figure}[ht!]
  \centering
  \hspace*{-0.075\textwidth} 
  \begin{adjustbox}{center}
    \includegraphics[width=1.15\textwidth]{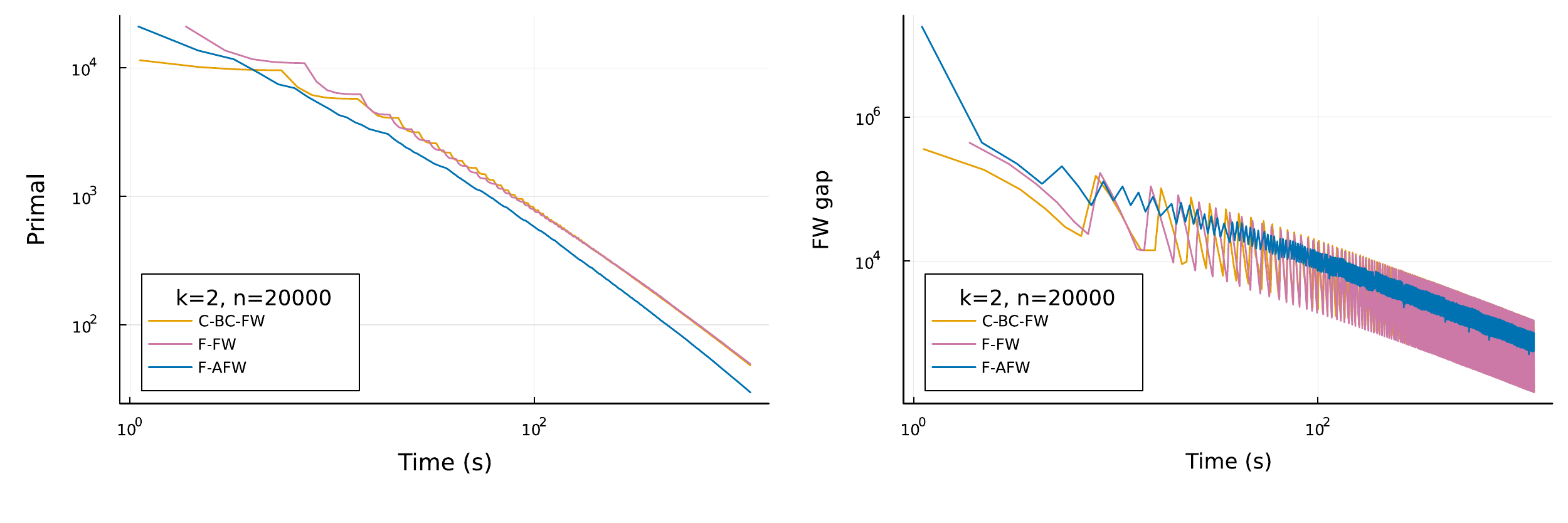}
  \end{adjustbox}
\caption{Different variants of the FW algorithm (C-BC-FW, F-FW, F-AFW) run for up to $1000$ iterations on two intersecting polytopes in $\R^{20000}$. Note that the FW gap is not guaranteed to decrease monotonically but the minimum of it across iterations decreases linearly as well.}
\label{fig:k2n10000_intersecting}
\end{figure}


We note that we also experimented with a stochastic block-coordinate \FW{} variant, which differs from C-BC-\FW{} by selecting blocks in random order instead of cyclically. In our tests, its primal and FW gap trajectories were almost exactly the same as those of C-BC-\FW{}, so we decided to omit it from our figures.

\cref{fig:k2n20000,fig:k10n10000} showcase the superiority of
employing \AFW{} variants in speeding up convergence.
Most importantly, \emph{our computational results provide empirical confirmation of our theoretical linear rates for \AFW{}} in several scenarios, as this algorithm converged consistently faster than F-\FW{}
and C-BC-\FW{} in all settings on disjoint polytopes.
\section{Conclusion}

In this work, we have shown that the product of well‐conditioned polytopes inherits a robust notion of conditioning, both in terms of pyramidal width and vertex–facet distance, which in turn guarantees linear convergence of the Away Frank-Wolfe method under standard assumptions. Our analysis yields explicit bounds on the resulting condition numbers of the product domain, and thus on the convergence rate, as a function of the individual polytope components. We then applied this theory to the classical convex-feasibility problem over intersections of polytopes by minimizing a smooth, convex, Polyak-\L{}ojasiewicz objective over the Cartesian product. We provided empirical results that strongly corroborate our findings.
\begin{ack}
The research in this paper was partially supported through the Research Campus Modal funded by the German Federal Ministry of Education and Research (fund numbers 05M14ZAM,05M20ZBM) and the Deutsche Forschungsgemeinschaft (DFG) through the DFG Cluster of Excellence MATH+.
David Martínez-Rubio was partially funded by the project IDEA-CM (TEC-2024/COM-89).
Francisco Criado was supported by grant PID2022-137283NB-C21.
\end{ack}
\printbibliography[heading=bibintoc] %
\clearpage
\appendix
\section{Proofs}\label{a:proofs}
\subsection{Pyramidal width}\label{a:pw}

\cref{fact:cart_prod_kpolytopes} contains some simple known facts about polytopes, that serve as a warm-up and will be useful in the sequel.

\begin{lemma}[\textbf{Cartesian product of $k$ polytopes}]\label{fact:cart_prod_kpolytopes}
The Cartesian product $\mP = \Pi_{i \in [k]} \mP_i$ of $k$ polytopes, each in $\R^n$ for some $n \in \mathbb{N}_{>0}$, is a polytope of dimension $kn$.
In particular, $\verts{\mP} = \Pi_{i \in [k]}\verts{\mP_i}$.
Further, if $F_\mP \in \faces{\mP}$, then  $F_\mP$ is the product of faces of all the individual polytopes, i.e., $\forall i \in [k]$ there exist $F_{\mP_i} \in \faces{\mP_i}$ such that $F_\mP = \Pi_{i \in [k]} F_{\mP_i}$.
\end{lemma}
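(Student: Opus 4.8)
The plan is to reduce everything to the two-factor case and then induct on $k$. Using the natural identification $\R^{(k-1)n}\times\R^{n}\cong\R^{kn}$ together with associativity of the Cartesian product, I would write $\Pi_{i\in[k]}\mP_i=\left(\Pi_{i\in[k-1]}\mP_i\right)\times\mP_k$, so that once each of the three claims is established for a product of two polytopes $\mP\subseteq\R^{n_1}$ and $\mQ\subseteq\R^{n_2}$, the general statement follows by a routine induction: the dimension accumulates additively, the vertex set accumulates as a product, and faces accumulate as products of faces.

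For the two-factor base case I would first fix bounded H-representations $\mP=\{x:Ax\le b\}$ and $\mQ=\{y:Cy\le d\}$. Stacking the two systems gives $\mP\times\mQ=\{(x,y):Ax\le b,\ Cy\le d\}$, a bounded polyhedron and hence a polytope. Since the defining system of the product is the disjoint union of the two systems, one has $\aff{\mP\times\mQ}=\aff{\mP}\times\aff{\mQ}$, whence $\dim(\mP\times\mQ)=\dim\mP+\dim\mQ$; when each factor is full-dimensional this yields dimension $kn$ after the induction.

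For the vertices I would use the extreme-point characterization. If $v\in\verts{\mP}$ and $w\in\verts{\mQ}$, then any representation $(v,w)=\lambda(x_1,y_1)+(1-\lambda)(x_2,y_2)$ with $\lambda\in(0,1)$ and the points in $\mP\times\mQ$ forces $v=\lambda x_1+(1-\lambda)x_2$ and $w=\lambda y_1+(1-\lambda)y_2$; extremality of $v$ and $w$ then gives $x_1=x_2$ and $y_1=y_2$, so $(v,w)$ is extreme. Conversely, if the first coordinate of a point $(v,w)$ is not extreme in $\mP$, say $v=\lambda x_1+(1-\lambda)x_2$ with $x_1\ne x_2$, then $(v,w)=\lambda(x_1,w)+(1-\lambda)(x_2,w)$ exhibits $(v,w)$ as a proper convex combination of two distinct points of the product, so it is not a vertex; the same holds for the second coordinate. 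Hence $\verts{\mP\times\mQ}=\verts{\mP}\times\verts{\mQ}$.

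The main work, and the step I expect to be the genuine obstacle, is the face decomposition. I would use the support characterization: every nonempty face of a polytope is the set of maximizers of a linear functional. Given a nonempty face of $\mP\times\mQ$, write it as the $\argmax$ of $(x,y)\mapsto\innp{c,x}+\innp{e,y}$. Because this objective separates across the two blocks, its maximum over the product equals $\max_{x\in\mP}\innp{c,x}+\max_{y\in\mQ}\innp{e,y}$, and the maximizing set is exactly the product $F_\mP\times F_\mQ$ of the two individual maximizing sets, which are faces of $\mP$ and $\mQ$. Iterating gives $F_{\mP}=\Pi_{i\in[k]}F_{\mP_i}$. The one subtlety I would flag is properness: a factor $F_{\mP_i}$ may equal the whole polytope $\mP_i$ (an improper face), and $F_{\mP}$ is a \emph{proper} face precisely when it is nonempty and at least one factor is proper, so the decomposition should be read as allowing improper factors rather than requiring $F_{\mP_i}\in\faces{\mP_i}$ for every $i$.
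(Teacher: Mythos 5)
Your proposal is correct and follows essentially the same route as the paper's proof: a stacked H-representation to show the product is a polytope with additive dimension, the standard extreme-point argument for $\verts{\mP\times\mQ}=\verts{\mP}\times\verts{\mQ}$ (the paper uses a convex-hull identity instead, a cosmetic difference), and the linear-functional/maximizer characterization showing faces of the product are products of faces. Your closing caveat about properness is a genuine improvement rather than a deviation: as stated, the lemma asks for $F_{\mP_i}\in\faces{\mP_i}$ with $\faces{\cdot}$ denoting \emph{proper} faces, which is too strong (e.g.\ a facet of the product has the form $F_{\mP_1}\times\mP_2\times\cdots\times\mP_k$, as the paper itself uses later), and the paper's proof silently allows improper factors exactly as you do.
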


\begin{proof}
For all $i \in [k]$, we denote the $i$-th polytope by $\mP_i \defi \{x \in \R^n : A_{\mP_i} x \le b_{\mP_i}\}$, where $A_{\mP_i} \in \R^{m_{\mP_i} \times n}$ and $b_{\mP_i} \in \R^{m_{\mP_i}}$.
Compactness and convexity are inherently preserved in the Cartesian product, so
\[
\begin{array}{rcl}
\Pi_{i \in [k]} &=& \{(x^1, \dots, x^k) \in \R^n\times\cdots \times\R^n\,:\, x^1 \in \mP_1, \dots, x^k \in \mP_k\}\\
&=& \{(x \in \R^{kn}\,:\, \forall i \in [k]\,,\, A_{\mP_i} x^i \le b_{\mP_i}\}\,.
\end{array}
\]
By linearity of the constraints and independence of each block $i \in [k]$,
the product is still a polytope described by a constraint matrix in $\R^{(m_{\mP_1}+\dots+m_{\mP_k})\times kn}$ with diagonal blocks $A_{\mP_i}$, for $i \in [k]$, and right-hand side coefficients $b_{\mP_i} \in \R^{m_{\mP_1}+\dots+m_{\mP_k}}$.
Moreover, since $\forall i \in [k]$, $\mP_i = \conv{\mP_i}$ and $\conv{\Pi_{i \in [k]}\mP_i} = \Pi_{i \in [k]}\conv{\mP_i}$, then $\verts{\mP} = \Pi_{i \in [k]}\verts{\mP_i}$.
Finally, a polytope face is the set of points optimizing some linear functional over the polytope.
Consider the linear functional defined by $n = (n^1, \ldots, n^k) \in \R^k$.
A point $(p^1, \ldots, p^k) \in \mP$ optimizes the linear functional defined by $n$ if and only if, each $p^i$ optimizes the linear functional defined by the corresponding $n^i$ over $\mP_i$, as a linear program in the product is solved in each component independently.
If, for all $i \in [k]$, we denote the faces defined by that condition as $F_i \subseteq \mP_i$, the set of points optimizing $n$,is exactly $F_1 \times F_2 \times \cdots \times F_k \subset \mP_1 \times \mP_2 \times \cdots \times \mP_k$, i.e., a face $F_\mP$ of $\mP$.
\end{proof}

\begin{proof}\linkofproof{lemma:affine_pyramidal_width}

Suppose that $p$ is not in the relative interior of $f$. Then $p$ might be in the exterior or in the boundary of $f$.
Choose one facet $g$ of $f$ such that $\aff{g}$ separates
$p$ from the interior of $f$ ($p$ could lie exactly in $\aff{g}$ if $p$ is in the relative boundary of $f$). We will show that $\whatface{g}{\mP}<\whatface{f}{\mP}$, which would be a contradiction.

\begin{figure}[ht!]
\centering
\includegraphics[width=0.8\textwidth]{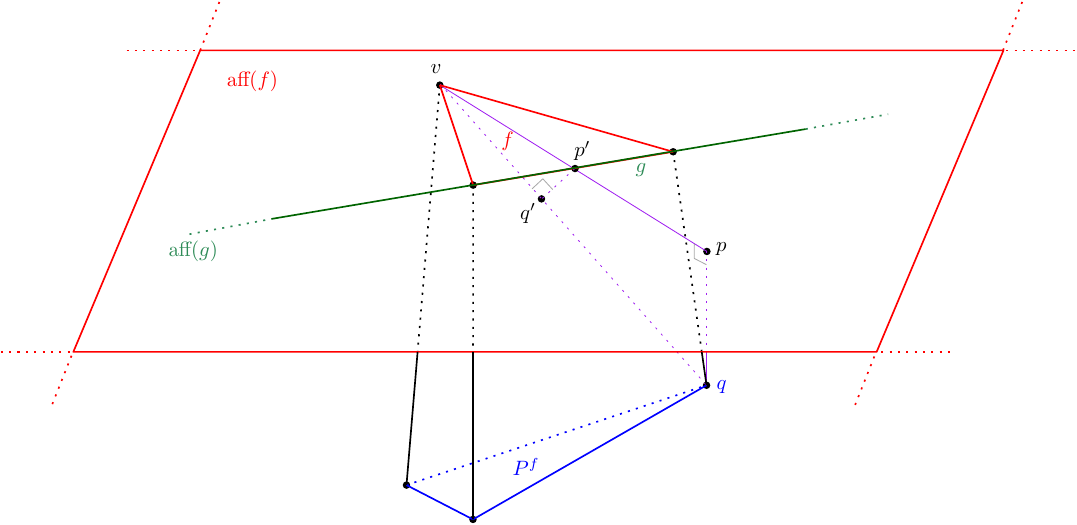}
\caption{\gabrev{To prove that $\dist{\aff{g}, [q,v]} < \dist{\aff{f},q}$, we looking at the right triangle $v,p,q$ and compare $\dist{p',q'}$ with $\dist{p,q}$.}
}
\label{fig:relint_pw}
\end{figure}

Since $g$ is a face of $f$, there is some vertex $v$ of $f$ that is not a vertex of $g$, which means $v\not\in \aff{g}$. Let $p'$ be the intersection of the segment $[p,v]$ with the affine subspace $\aff{g}$. This intersection exists and is unique because $\aff{g}$ separates $p$ and $v$, $[p,v]$ is $1$-dimensional, and $\aff{g}$ is $1$-codimensional as a subspace of $\aff{f}$.

Let $q$ be the point of $\mP^f$ that is closest to $p$. Finally, let $q'$ be the point of the segment $[v,q]$ that is closest to $\aff{g}$. By construction, $q'\in \mP^g$. \gabrev{Observe from \cref{fig:relint_pw} that, in the right triangle $v,p,q$, the segment corresponding to $\dist{p',q'}$ is at most as long as its height $\dist{p,q}$ over the hypotenuse, giving the key inequality:
}
%
\[ \whatface{g}{\mP} \leq \dist{\aff{g},q'} \leq \dist{p',q'} < \dist{p,q} = \whatface{f}{\mP}. \]
\end{proof}

In the sequel we will use $\w{\mP}$ to denote both the affine pyramidal width and the pyramidal width. However, we will use the definition of affine pyramidal width to prove the following theorem.


\begin{proof}\linkofproof{prop:pyrwidth_mPmQ}
  We will first prove the upper bound by explicitly constructing the face of the product that achieves the affine pyramidal width we claim, as well as the points that achieve the minimum distance. In the second part of the proof, we will prove that no other face of the product can have a smaller affine pyramidal width.

  Let $f_1$ be a face of $\mP_1$ such that $\w{\mP_1}=\whatface{f_1}{\mP_1}$. Let $p_1\in f_1$, $q_1\in \mP_1^{f_1}$, and $\delta_1\in \R$ be such that $\dist{p_1,q_1}=\delta_1=\w{\mP_1}$. Note that we know that $p_1\in f_1$ and not just $p \in \aff{f_1}$ because of \cref{lemma:affine_pyramidal_width} and the optimality of $f_1$. We define $f_2$, $p_2$, $q_2$, and $\delta_2$ analogously.

  Let
  \begin{align*}
    p&=(p_1,p_2), \\
    q&=(p_1,q_2)\frac{\delta_1^2}{\delta_1^2+\delta_2^2} + (q_1,p_2)\frac{\delta_2^2}{\delta_1^2+\delta_2^2}. \end{align*}

  We have that $p\in f_1\times f_2$ by construction. Now, we also have $q\in \left(\mP_1\times \mP_2\right)^{f_1\times f_2}$. Indeed, $q$ is a convex combination of the points $(p_1,q_2)$ and $(q_1,p_2)$. The point $(p_1,q_2)$ is in $f_1 \times \mP_2^{f_2}$, which is a subset of $\left(\mP_1\times \mP_2\right)^{f_1\times f_2}$. Analogously, $(q_1,p_2)$ is also in $\left(\mP_1\times \mP_2\right)^{f_1\times f_2}$. Thus, their convex combination is also in $\left(\mP_1\times \mP_2\right)^{f_1\times f_2}$.

  Now, let us compute $\dist{p,q}$, which is an upper bound for $\w{\mP_1\times \mP_2}$:
  \begin{align*}
    \dist{p,q} &= \left\| (p_1, p_2) - \left((p_1,q_2)\frac{\delta_1^2}{\delta_1^2+\delta_2^2} + (q_1,p_2)\frac{\delta_2^2}{\delta_1^2+\delta_2^2}\right) \right\| \\
                  &= \left\| \left(\left(p_1 - q_1\right)\frac{\delta_2^2}{\delta_1^2+\delta_2^2}, \left(p_2 - q_2\right)\frac{\delta_1^2}{\delta_1^2+\delta_2^2} \right) \right\| \\
                  &= \sqrt{\frac{\left(\delta_2^2+\delta_1^2\right)\delta_1^2\delta_2^2}{\left(\delta_1^2+\delta_2^2\right)^2}}\\
                  &= \frac{1}{\sqrt{\frac{1}{\delta_1^2}+\frac{1}{\delta_2^2}}}.
  \end{align*}

  Now let us prove the lower bound. This time, let $f_1$ and $f_2$ be \emph{arbitrary} faces of $\mP_1$ and $\mP_2$ and denote $\what_1 \defi \whatface{f_1}{\mP_1}$ and $\what_2 \defi \whatface{f_2}{\mP_2}$.

  Since $\dist{\aff{f_1},\mP_1^{f_1}} = \what_1$ and $\dist{\aff{f_2},\mP_2^{f_2}} = \what_2$, by the Hyperplane Separation Theorem there are linear functionals:

  \begin{align*}
      \begin{aligned}
    \pi_1:&\R^n \to \R, &\pi_1(x) = \langle u_1, x \rangle + c_1,      \\
    \pi_2:&\R^m \to \R, &\pi_2(y) = \langle u_2, y \rangle + c_2,
      \end{aligned}
  \end{align*}
  such that $\pi_1(x) = \what_1$, $\pi_2(y) = \what_2$ for all $x\in \aff{f_1}$ and $y\in \aff{f_2}$, and $\pi_1(x) \leq 0$, $\pi_2(y) \leq 0$ for all $x\in \mP_1^{f_1}$, $y\in \mP_2^{f_2}$. Furthermore $|u_1|=|u_2|=1$. Observe that $\pi_1$, $\pi_2$ are constant over $\aff{f_1}$ and $\aff{f_2}$ respectively. Indeed, for example if $\pi_1$ were not constant over $\aff{f_1}$ then $\pi_1(\aff{f_1})=\R$ so $\pi_1$ would not be a separating functional.

  We define the linear functional:
  \begin{align*}
    \pi: \R^{n+m} \to \R, \quad \pi(x,y) = \frac{\frac{\pi_1(x)}{\what_1}+\frac{\pi_2(y)}{\what_2}-1}{\sqrt{\frac{1}{\what_1^2}+\frac{1}{\what_2^2}}}= \left\langle \frac{\left(\frac{u_1}{\what_1}, \frac{u_2}{\what_2}\right)}{\sqrt{\frac{1}{\what_1^2}+\frac{1}{\what_2^2}}}, (x,y) \right\rangle + c.
  \end{align*}

  Observe that the gradient of $\pi$ is unitary by construction. Also, $\pi(x,y)=\frac{1}{\sqrt{\frac{1}{\what_1^2}+\frac{1}{\what_2^2}}}$ for all $(x,y)\in \aff{f_1}\times \aff{f_2}$. Now let us study $\pi(x,y)$ over the polytope $\left(\mP_1\times \mP_2\right)^{f_1\times f_2}$.

  Since $\left(\mP_1\times \mP_2\right)^{f_1\times f_2}$ is a polytope, $\pi$ is maximized at a vertex of $\left(\mP_1\times \mP_2\right)^{f_1\times f_2}$. The vertices of this polytope are of the form $(v_1,v_2)$ where $v_1\in \verts{\mP_1}$ and $v_2\in \verts{\mP_2}$ but removing the vertices of $f_1\times f_2$. In other words, $v_1\not\in \aff{f_1}$ or $v_2\not\in \aff{f_2}$.

  Without loss of generality we consider the case where $v_2\in \verts{\mP_2}$. Then, $\pi_1(v_1)\leq \what_1$ and $\pi_2(v_2)\leq 0$, which implies $\pi(v_1,v_2)\leq 0$. Therefore, $\pi$ is a separating functional for $\left(\mP_1\times \mP_2\right)^{f_1\times f_2}$ and $\aff{f_1}\times \aff{f_2}$.

  Recall that $\pi$ is unitary, so the distance between $\left(\mP_1\times \mP_2\right)^{f_1\times f_2}$ and $\aff{f_1}\times \aff{f_2}$ is at least $\frac{1}{\sqrt{\frac{1}{\what_1^2}+\frac{1}{\what_2^2}}}$. In other words, $\whatface{f_1\times f_2}{\mP_1\times \mP_2}\geq \frac{1}{\sqrt{\frac{1}{\what_1^2}+\frac{1}{\what_2^2}}}$.

  By repeating this argument for every pair of faces of $\mP_1$ and $\mP_2$:

   \begin{align*}
     \w{\mP_1\times \mP_2} &= \min_{\substack{f_1\in F(\mP_1)\\ f_2\in F(\mP_2)}} \whatface{f_1\times f_2}{\mP_1\times \mP_2} \\
               &\geq \min_{\substack{f_1\in F(\mP_1)\\ f_2\in F(\mP_2)}} \left(\frac{1}{\sqrt{\frac{1}{\left(\whatface{f_1}{\mP_1}\right)^2} + \frac{1}{\left(\whatface{f_2}{\mP_2}\right)^2} }} \right) \\
               &= \frac{1}{\sqrt{\frac{1}{\left(\min_{f_1\in F(\mP_1)}\whatface{f_1}{\mP_1}\right)^2}+\frac{1}{\left(\min_{f_2\in F(\mP_2)}\whatface{f_2}{\mP_2}\right)^2}}} \\
               &= \frac{1}{\sqrt{\frac{1}{\w{\mP_1}^2}+\frac{1}{\w{\mP_2}^2}}}.
   \end{align*}

   This concludes the proof.
\end{proof}

\begin{lemma}\label{lemma:preparatory_bounds_pw}
Let $a, b > 0$. Then
\begin{equation}
\frac{1}{\sqrt{2}}\min\{a, b\} \le \sqrt{\frac{a^2b^2}{a^2 + b^2}} < \min\{a, b\}
\label{eq:preparatory_bounds_pw}
\end{equation}
and $\sqrt{\frac{a^2b^2}{a^2 + b^2}}$ is monotonically nondecreasing in both $a$ and $b$ over $\R_{++}$.
\end{lemma}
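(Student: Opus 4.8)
The plan is to work throughout with the square of the quantity of interest, writing $H \defi \frac{a^2 b^2}{a^2 + b^2}$, and to exploit the symmetry in $a$ and $b$ by assuming without loss of generality that $a \le b$, so that $\min\{a,b\} = a$. Since the square root is order-preserving on $\R_{\ge 0}$ and all the quantities involved are nonnegative, the double inequality \eqref{eq:preparatory_bounds_pw} is equivalent to the pair of comparisons $\tfrac{a^2}{2} \le H < a^2$, and I would establish each of these by clearing the (positive) denominator $a^2+b^2$.

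For the strict upper bound I would show $H < a^2$, which after multiplying through by $a^2+b^2>0$ is equivalent to $a^2 b^2 < a^2(a^2+b^2)$, i.e. $0 < a^4$; this holds because $a>0$. For the lower bound I would show $\tfrac{a^2}{2} \le H$, which similarly reduces to $a^2(a^2+b^2) \le 2a^2 b^2$, i.e. $a^2 \le b^2$; this holds by the assumption $a \le b$, with equality precisely when $a=b$. This also explains why the lower bound is non-strict while the upper bound is strict.

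For the monotonicity claim, the cleanest route is to pass to the reciprocal and observe that
\[
  \frac{1}{H} = \frac{a^2+b^2}{a^2 b^2} = \frac{1}{a^2} + \frac{1}{b^2}.
\]
For fixed $b$, the map $a \mapsto 1/a^2$ is strictly decreasing on $\R_{++}$, so $1/H$ is strictly decreasing in $a$; hence $H$, and therefore its square root, is strictly increasing in $a$. By the symmetry of $H$ in its two arguments, the same holds in $b$. In particular the function is nondecreasing in each argument, as claimed.

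Since each step is an elementary inequality, I do not expect any genuine obstacle here. The only points requiring mild care are invoking the symmetry to justify the ``without loss of generality'' reduction, and keeping track of which inequality is strict versus non-strict: the upper bound is strict because $a>0$, whereas the lower bound attains equality exactly at $a=b$.
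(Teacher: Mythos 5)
Your proof is correct, and for the two inequalities it is essentially the paper's argument: both assume without loss of generality that $a = \min\{a,b\}$ and then reduce to elementary algebra (the paper bounds the denominator via $a^2 + b^2 \le 2b^2$ and uses $\frac{b^2}{a^2+b^2} < 1$; you clear denominators instead, which is the same computation presented differently). The one place you genuinely diverge is the monotonicity claim: the paper simply asserts that the partial derivatives are positive, whereas you pass to the reciprocal and observe $\frac{1}{H} = \frac{1}{a^2} + \frac{1}{b^2}$, which is decreasing in each argument. Your route is cleaner --- it avoids any differentiation, makes the strict monotonicity transparent, and incidentally explains the harmonic-mean structure of the quantity; the paper's derivative check buys nothing extra here, so your version is, if anything, preferable.
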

\begin{proof}
Assume without loss of generality that $a = \min \{a, b\}$.
Then, we have $\sqrt{\frac{a^2b^2}{a^2 + b^2}} \ge \sqrt{\frac{a^2b^2}{2b^2}} = \frac{a}{\sqrt{2}}$, which corresponds to the lower bound.
The upper bound follows from the fact that $\frac{b^2}{a^2 + b^2} < 1$.
Monotonicity is verified by examining the partial derivatives, which are always positive.
\end{proof}
\begin{corollary}\label{cor:pyr_width_lb_for_k_polytopes}
Let $\mathcal{P} \defi \prod_{i\in[k]} \mathcal{P}_i$ be a product of polytopes. Then,
\[
\delta_\mathcal{P} \geq \frac{1}{\sqrt{2}^{\ceil{\log_2 (k+1)}}} \min_{i \in [k]} \pw{\mP_i} =\bigomegal{\frac{1}{\sqrt{k}}\min_{i\in[k]} \delta_{\mathcal{P}_i}}.
\]
\end{corollary}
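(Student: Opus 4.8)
The plan is to lift the two-factor identity of \cref{prop:pyrwidth_mPmQ} to $k$ factors by a recursive, tree-structured grouping, and then to control the accumulated loss with the elementary facts of \cref{lemma:preparatory_bounds_pw}. Since the Cartesian product is associative, I may parenthesize $\mathcal{P} = \prod_{i\in[k]}\mathcal{P}_i$ as a balanced binary tree whose $k$ leaves are the $\mathcal{P}_i$ and whose internal nodes are products of two subproducts. \cref{prop:pyrwidth_mPmQ} then applies verbatim at every internal node, giving $\pw{\mathcal{A}\times\mathcal{B}} = \sqrt{\pw[2]{\mathcal{A}}\,\pw[2]{\mathcal{B}}/(\pw[2]{\mathcal{A}}+\pw[2]{\mathcal{B}})}$ for the two children $\mathcal{A},\mathcal{B}$ of that node, regardless of how the factors were grouped.

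First I would record the two facts I need from \cref{lemma:preparatory_bounds_pw}: the combining map $g(a,b)\defi\sqrt{a^2b^2/(a^2+b^2)}$ is nondecreasing in each argument, and $g(a,b)\ge\frac{1}{\sqrt{2}}\min\{a,b\}$. Writing $m\defi\min_{i\in[k]}\pw{\mP_i}$, monotonicity lets me lower-bound every node value after replacing each child value by its own lower bound, and the inequality then shows each internal node loses at most a factor $1/\sqrt{2}$ relative to the minimum of its two children. By induction on the tree, propagating the minimum upward, the value at a node of height $h$ is at least $m/\sqrt{2}^{\,h}$ (base case: a leaf has value $\pw{\mP_i}\ge m$). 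Hence the root satisfies $\pw{\mP}\ge m/\sqrt{2}^{\,H}$, where $H$ is the height of the tree.

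A balanced binary tree on $k$ leaves has height $H=\ceil{\log_2 k}\le\ceil{\log_2(k+1)}$, which yields the claimed $\pw{\mP}\ge\frac{1}{\sqrt{2}^{\ceil{\log_2(k+1)}}}\,m$. For the asymptotic statement I would note $k\le 2^{\ceil{\log_2(k+1)}}\le 2(k+1)$, whence $\frac{1}{\sqrt{2}^{\ceil{\log_2(k+1)}}}=\Theta(1/\sqrt{k})$ and in particular it is $\bigomegal{1/\sqrt{k}}$, giving the displayed equality. As an even cleaner alternative, unrolling \cref{prop:pyrwidth_mPmQ} along any grouping gives the exact reciprocal identity $\pw[-2]{\mP}=\sum_{i\in[k]}\pw[-2]{\mP_i}$, from which $\pw{\mP}=(\sum_{i}\pw[-2]{\mP_i})^{-1/2}\ge\frac{1}{\sqrt{k}}\,m$ follows immediately; this is tighter than, and therefore implies, the stated bound.

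I expect the work to be bookkeeping rather than a genuine obstacle: I must justify that monotonicity permits the leaf-replacement step simultaneously at all leaves, that the minimum indeed propagates through a tree that is not perfectly balanced when $k$ is not a power of two (so that the \emph{height}, not the number of leaves, is the exponent), and that associativity of $\times$ legitimately lets \cref{prop:pyrwidth_mPmQ} be applied at each internal node. None of these steps requires computation beyond the two properties already isolated from \cref{lemma:preparatory_bounds_pw}.
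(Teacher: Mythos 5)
Your primary argument is correct and is essentially the paper's own proof: the paper proceeds by induction, splitting the factors into $\mP_1\times\cdots\times\mP_{\ceil{k/2}}$ and $\mP_{\ceil{k/2}+1}\times\cdots\times\mP_k$, applying \cref{prop:pyrwidth_mPmQ} together with the $\frac{1}{\sqrt{2}}\min\{a,b\}$ bound of \cref{lemma:preparatory_bounds_pw} at the root, and absorbing the height $\ceil{\log_2 k}$ of this recursion into the exponent $\ceil{\log_2(k+1)}$ --- exactly your balanced binary tree. (Incidentally, monotonicity is not actually needed: since \cref{prop:pyrwidth_mPmQ} is an equality, each internal node's value \emph{equals} the combination of its children's values, so the bound $\sqrt{a^2b^2/(a^2+b^2)}\ge\frac{1}{\sqrt{2}}\min\{a,b\}$ alone drives the induction.)

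Your alternative route is the genuinely different one, and it is stronger than what the paper does: taking reciprocal squares in \cref{prop:pyrwidth_mPmQ} gives $\pw[-2]{\mP\times\mQ}=\pw[-2]{\mP}+\pw[-2]{\mQ}$, hence by associativity the exact formula
\[
\pw{\mP}=\Bigl(\sum_{i\in[k]}\pw[-2]{\mP_i}\Bigr)^{-1/2}\ \ge\ \frac{1}{\sqrt{k}}\min_{i\in[k]}\pw{\mP_i},
\]
which implies the stated inequality because $2^{\ceil{\log_2(k+1)}}\ge k+1>k$, and it dispenses with the lossy per-level $\frac{1}{\sqrt{2}}$ factor altogether. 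The paper leaves this exact additivity unused, settling for the weaker $\sqrt{2}^{\,\ceil{\log_2(k+1)}}$ constant; your second argument thus both simplifies and sharpens the corollary, yielding the pyramidal width of the $k$-fold product exactly rather than only up to a constant.
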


\begin{proof}
We use induction and \cref{fact:cart_prod_kpolytopes} 
to encode the facial distance of the $k$-Cartesian product.
For the lower bound, the base case with $k=2$ is implied by \cref{prop:pyrwidth_mPmQ,lemma:preparatory_bounds_pw}.
For the induction step, we assume that the lower bound holds for the Cartesian product of $m \le \ceil{k/2}$ polytopes and consider the two polytopes $\mP_1 \times \cdots \times \mP_{\ceil{k/2}}$ and $\mP_{\ceil{k/2}+1} \times \cdots \times \mP_k$.
Then,
\[
\begin{array}{rcl}
\pw{1:k} &\ge& \frac{1}{\sqrt{2}}\min\left\{\pw{\mP_1\cdots\mP_{\ceil{k/2}}}, \pw{\mP_{\ceil{k/2}+1}\cdots \mP_k}\right\}\\
&\ge&
\frac{1}{\sqrt{2}} \displaystyle\min \left\{
\frac{1}{\sqrt{2}^{\ceil{\log_2 \ceil{k/2}}}}\min_{i \in \left\{1, \dots, \ceil{k/2}\right\}} \pw{\mP_i}\,,\,
\frac{1}{\sqrt{2}^{\ceil{\log_2 \ceil{k/2}}}}\min_{j \in \left\{\ceil{k/2}+1, \dots, k\right\}} \pw{\mP_j}
\right\}\\
&\ge&
\displaystyle \frac{1}{\sqrt{2}^{1 + \ceil{\log_2 \ceil{k/2}}}}
\min_{i \in [k]} \pw{\mP_i}\\
&\geq&
\displaystyle \frac{1}{\sqrt{2}^{\ceil{\log_2 (k+1)}}}
    \min_{i \in [k]} \pw{\mP_i} = \bigomegal{\frac{1}{\sqrt{k}} \min_{i \in [k]} \pw{\mP_i}},
\end{array}
\]
where the first inequality is the base case with two polytopes, the second one applies the induction step, the third one uses $\sqrt{2}^{\ceil{\log_2 \ceil{k/2}}} \ge \sqrt{2}^{\floor{\log_2 \floor{k/2}}}$ and computes the inner $\min$.

\end{proof}

\subsection{Vertex-facet distance}\label{a:vf}

In order to prove \cref{prop:vfd_PxQ_bounds}, we define the \emph{inner vertex-facet distance} of $\mP$ with respect to $F_\mP \in \facets{\mP}$ as
\[
\vf{\mP, F_\mP} \defi \dist{\aff{F_\mP}, \verts{\mP}\setminus \verts{F_\mP}}\,,
\]
so that $\vf{\mP} = \min_{F_\mP \in \facets{\mP}} \vf{\mP, F_\mP}$.
\begin{proof}\linkofproof{prop:vfd_PxQ_bounds}
It is enough to show the statement for the Cartesian product of two polytopes $\mP$ and $\mQ$.
By \cref{def:vfd},
\begin{equation}
\vf{\mP \times \mQ} = \displaystyle\min_{F \in \facets{\mP \times \mQ}} \vf{\mP \times \mQ, F} = \min_{F \in \facets{\mP \times \mQ}}\dist{\aff{F}, \verts{\mP \times \mQ}\setminus \verts{F}}\,.
\label{eq:vfd_prod}
\end{equation}
All product facets $F$ are of the form $F_\mP \times \mQ$, where $F_\mP$ is a facet of $\mP$, or $\mP \times F_\mQ$, where $F_\mQ$ is a facet of $\mQ$.
Given $F_\mP \times \mQ \in \facets{\mP \times \mQ}$, its vertices are
\begin{equation}\label{eq:verts_facet_equiv}
\begin{array}{rcl}
\verts{\mP \times \mQ} \setminus \verts{F_\mP \times \mQ} &=& \left(\verts{\mP} \times \verts{\mQ}) \setminus (\verts{F_\mP} \times \verts{\mQ}\right)\\
&=& \left(\verts{\mP} \setminus \verts{F_\mP}\right) \times \verts{\mQ}\,.
\end{array}
\end{equation}
so
\[
\begin{array}{rcl}
\vf{\mP \times \mQ, F_\mP \times \mQ} &=&
\dist{\aff{F_\mP \times \mQ}, \left(\verts{\mP} \setminus \verts{F_\mP}\right) \times \verts{\mQ}}
\\
&=& \displaystyle\min_{(x,y) \in \aff{F_\mP \times \mQ}} \min_{(z_1, z_2) \in \left(\verts{\mP} \setminus \verts{F_\mP}\right) \times \verts{\mQ}} \|(x, y) - (z_1, z_2)\|\\ &=&
\displaystyle\min_{\substack{x \in \aff{F_\mP}\\ y \in \aff{\mQ}}} \min_{\substack{z_1 \in \verts{\mP} \setminus \verts{F_\mP}\\ z_2 \in \verts{\mQ}}} \sqrt{\|x - z_1\|^2 + \|y - z_2\|^2}\\
&=& \vf{\mP, F_\mP}\,.
\end{array}
\]
The first equality is the definition of inner vertex-facet distance (see \cref{def:vfd}).
In the second equality, we apply the fact that $\aff{F \times \mQ} = \aff{F} \times \aff{\mQ}$ and \cref{eq:verts_facet_equiv}.
The third equality is just a reformulation over the individual sets in $\mP$ and in $\mQ$.
To obtain the last equality, we minimize the first summand by choosing $x, z_1$ achieving $\vf{\mP, F_\mP}$ and the second summand by selecting $y = z_2$ so that $\|y - z_2\|^2 = 0$, which we can do as $\verts{\mQ} \subset \aff{\mQ}$.
Similarly, the vertices of facets of the form $\mP \times F_\mQ$ are $\verts{\mP \times \mQ} \setminus \verts{\mP \times F_\mQ} = \verts{\mP} \times \left(\verts{\mQ} \setminus \verts{F_\mQ}\right)$, thus $\vf{\mP \times \mQ, \mP \times F_\mQ} = \vf{\mQ, F_\mQ}$.

Plugging the above equalities in \cref{eq:vfd_prod} yields:
\[
\begin{array}{rcl}
\vf{\mP \times \mQ} &=& \displaystyle\min_{F \in \facets{\mP \times \mQ}} \vf{\mP \times \mQ, F}\\
&=& \min \left\{\displaystyle \min_{F_\mP \in \facets{\mP}} \vf{\mP, F_\mP}, \min_{F_\mQ \in \facets{\mQ}} \vf{\mQ, F_\mQ}\right\}\\
&=& \min\{\vf{\mP}, \vf{\mP}\}\,,
\end{array}
\]
where the last equality follows from the definition of inner vertex-facet distance.
\end{proof}
\subsection{Properties of the objective function in \cref{prob:intersection_k}}\label{a:obj}

\begin{proof}\linkofproof{prop:lsmooth_cvx_pl_intersection_k}\label{proof:lsmooth_cvx_intersection_k}
We recall that $f(x) = \frac{1}{2k} \innp{M_k x, x}$, with $M_k \defi (k I_{k\times k} - \ones_{k} \ones_{k}^T) \otimes I_n \in \mathbb{R}^{nk \times nk}$. In the following, we use the shorthand $A_k \defi (k I_{k\times k} - \ones_{k} \ones_{k}^T)$.
As $f(x)$ is twice differentiable on $\R^{nk}$, we can compute its Hessian $\nabla^2 f(x) = \frac{1}{k}M_k$.
Moreover, $f(x)$ is a weighted sum of squared Euclidean norms with positive weights, so $\innp{M_k x, x} \geq 0$ for all $x \in \R^{nk}$. Thus, $M_k$ is positive semidefinite and $f$ is convex. 

Next, we characterize the eigenvalues of the Hessian $\frac{1}{k}M_k$.

First we note that, as $M_k$ is symmetric, and thus its eigenvalues are real. Then, since $f$ is convex and twice differentiable, the $L$-smoothness condition in \cref{def:conv_smooth} is equivalent to the eigenvalues of the Hessian being upper bounded by $L$, i.e., whether it holds that $
\frac{1}{k}\mu(M_k) \leq L$, for all $\mu(M_k) \in \R$.
As the eigenvalues of a Kronecker product are the product of the eigenvalues of the involved matrices, counting algebraic multiplicities,
the eigenvalues of the Hessian are of the form $\frac{1}{k}\mu(A_k)$, with $\mu(A_k)$ being the eigenvalues of $A_k$.
Notice that $\ones_k\ones_k^T$ is a rank-$1$ matrix, so it has exactly one nonzero eigenvalue and all other ones are zero. Exhibiting the eigenpair $(\ones_k\ones_k^T) \ones_k = k\ones_k$ shows that its single nonzero eigenvalue is its trace $k$, with multiplicity $1$.
Since the other matrix is a multiple of the identity, it follows that $\frac{1}{k}\mu(A_k)$ consists of $k$ eigenvalues with value $1$, and one that is $0$.
Therefore, the smoothness constant is $L=1$.


Now we prove the \PL{} property. Since $f(x)$ is convex and differentiable over the compact set $\mP \subset \R^{nk}$, we have that $\Omega^\ast$ is nonempty and a minimum exists.
Next, we observe that
\[
A_k^2 = (kI_k - \ones_k \ones_k^T)^2 = k^2I_k + k\ones_k\ones_k^T - 2k\ones_k\ones_k^T = k (kI_k - \ones_k\ones_k^T) = kA_k\,,
\]
so that $M_k^2 = (A_k \otimes I_n)^2 = A_k^2 \otimes I_n = k(A_k \otimes I_n) = kM_k$.
Therefore, $\|\nabla f(x)\|^2 = \frac{1}{k^2} \|M_k x\|^2 = \frac{1}{k^2}\innp{M_k^2x, x} = \frac{1}{k}\innp{M_k x, x} = 2f(x)$.
We then verify that $f$ is $1$-\PL{}, because $\frac{1}{2}\|\nabla f(x)\|^2 = f(x) \geq \left( f(x) - f^\ast \right)$ since the minimum $f^\ast$ is clearly nonnegative.
\label{proof:qg_pl_intersection_k}
\end{proof}
\subsection{Linear rates in the product in terms of the pyramidal width}\label{a:linconv_pw}

The goal of this section, and of the following \cref{a:linconv_vfd}, is to quantify linear convergence rates of the \AFW{} method for optimizing \PL{} objectives over product polytopes. In this work, we achieve this by applying the novel bounds we found on the pyramidal width and the vertex-facet distance of the product polytope, presented in \cref{prop:pyrwidth_mPmQ} and \cref{prop:vfd_PxQ_bounds}.

Linear convergence of certain \FW{} variants using Hölder error bounds (HEB), or \emph{sharpness} conditions, which generalize the \PL{} condition, is known and has been established in previous works (see, e.g., \cite{BCC+23,KAP21}). 
These results leverage a combination of the scaling inequality from \cref{thm:pyrwidth}, based on a \gabrev{positive} condition number of the polytope, and
\gabrev{an HEB instead of strong convexity.}
Our convergence results in \cref{lemma:geom_pl} and \cref{prop:linconv_pw} follow ideas similar to those in, say, {\cite[Theorem 2.29, Lemma 3.32, Corollary 3.33]{BCC+23}}. For completeness, here we include the full analysis that accounts for the specific quantities in our setting and yield slightly different convergence rates.

Given $a_t, w_t$ defined at lines 3-5 of \cref{algo:AFW}, we first recall {\cite[Theorem 3]{LJ15}}, that we write conveniently with our notation:
\begin{theorem}[{\cite[Theorem 3]{LJ15}}]\label{thm:scaling_ineq}
Let $\mP$ be a polytope, $x_t \in \mP$ be a suboptimal point and $\mA_t$ be an active set for $x_t$. Let $x^\ast$ be an optimal point and corresponding error direction $\hat{e}_t =  \frac{x_t - x^\ast}{\|x_t - x^\ast\|}$, and gradient $\nabla f(x_t)$ (and so $\innp{\nabla f(x_t), \hat{e}_t} > 0$). Let $\tilde{d}_t = a_t - w_t$ be the pairwise \FW{} direction obtained over $\mA_t$ and $\mP$, respectively, with gradient $\nabla f(x_t)$. Then
$\frac{\innp{\nabla f(x_t), \tilde{d}_t}}{\innp{\nabla f(x_t), \hat{e}_t}} \ge \delta_\mX$
\label{thm:pyrwidth}
\end{theorem}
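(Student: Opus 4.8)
The plan is to treat this as a restatement of \cite[Theorem 3]{LJ15} and to reconstruct its geometric argument in terms of the facial-distance characterization of the pyramidal width established in \cref{cor:affinepyr_sameas_pyr}. First I would reduce to a normalized setting: since the claimed ratio is invariant under positive rescaling of $\nabla f(x_t)$, I would assume $\norm{\nabla f(x_t)} = 1$ and write $r \defi \nabla f(x_t)$ and $e \defi x_t - x^\ast$, so that $\innp{r, \hat{e}_t} = \innp{r, e}/\norm{e} > 0$ by hypothesis and the denominator is strictly positive. The goal then becomes the single inequality $\innp{r, a_t - w_t} \ge \w{\mP}\,\innp{r, e}/\norm{e}$.

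Next I would split the pairwise gap additively, $\innp{r, a_t - w_t} = \innp{r, a_t - x_t} + \innp{r, x_t - w_t}$, into the (nonnegative) away gap and \FW{} gap. The \FW{} gap admits an immediate bound: because $w_t$ minimizes $\innp{r, \cdot}$ over $\mP$ and $x^\ast \in \mP$, we get $\innp{r, x_t - w_t} \ge \innp{r, x_t - x^\ast} = \innp{r, e}$. This already yields the claim whenever $\norm{e} \ge \w{\mP}$, so the entire difficulty is concentrated in the regime where $x_t$ is close to the optimum and $\norm{e}$ is small; there the bare \FW{} gap is too weak and the away vertex must compensate.

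The geometric core is to establish that the pairwise gap dominates $\w{\mP}\,\innp{r, \hat{e}_t}$ uniformly in $\norm{e}$. For this I would invoke the facial-distance definition of the pyramidal width (\cref{def:pyrwidth}) together with its relative-interior realization (\cref{cor:affinepyr_sameas_pyr}). Concretely, for an appropriately chosen face $f$ of $\mP$, the away vertex $a_t$ -- being an argmax of $\innp{r, \cdot}$ over the active set $\mA_t$, whose convex hull contains $x_t$ -- should lie in $f$, while the \FW{} vertex $w_t$ should lie in $\conv{\verts{\mP}\setminus f}$. Since $\dist{\aff{f}, \conv{\verts{\mP}\setminus f}} \ge \w{\mP}$, the hyperplane with normal $r$ separating these two sets forces $\innp{r, a_t - w_t}$ to be at least $\w{\mP}$ times the cosine between $r$ and the normal of $\aff{f}$, and identifying this cosine with $\innp{r, \hat{e}_t}$ completes the bound.

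The main obstacle I anticipate is exactly the face selection and the attendant case analysis glossed over above. The away vertex is optimized only over $\mA_t$ rather than over all of $\mP$, so one must verify that the face realizing the facial distance is compatible with both $a_t \in f$ and $w_t \notin f$; degenerate configurations (for instance when $r$ is orthogonal to $\aff{f}$, so that the away gap vanishes) require separate handling, and matching the projection factor to the precise cosine $\innp{r, \hat{e}_t}$ rather than to some other alignment is where the argument is most delicate. This is precisely the content of \cite[Theorem 3]{LJ15}, and working with $\aff{f}$ in place of $f$ -- justified by \cref{cor:affinepyr_sameas_pyr} -- is what keeps the separating-hyperplane computation clean.
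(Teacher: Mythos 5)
First, a point of comparison: the paper does not prove this statement at all — it is imported verbatim as \cite[Theorem 3]{LJ15} (the text introducing it says ``we first recall''), so your attempt is a reconstruction of an external result rather than something that can match or diverge from an internal argument. The elementary portion of your reconstruction is correct: the ratio is invariant under rescaling of $\nabla f(x_t)$, the pairwise gap splits as $\innp{\nabla f(x_t), a_t - w_t} = \innp{\nabla f(x_t), a_t - x_t} + \innp{\nabla f(x_t), x_t - w_t}$ with the away gap nonnegative (since $x_t \in \conv{\mA_t}$), and optimality of $w_t$ against $x^\ast \in \mP$ gives $\innp{\nabla f(x_t), x_t - w_t} \ge \innp{\nabla f(x_t), x_t - x^\ast}$, which settles the regime $\|x_t - x^\ast\| \ge \w{\mP}$.

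The geometric core, however, has a genuine gap, and it is exactly where the theorem lives. You choose $f$ to be the face realizing the minimum in \cref{def:pyrwidth} and then assert that $a_t \in f$ and $w_t \in \conv{\verts{\mP}\setminus \verts{f}}$. Neither claim is justified, and neither is true in general: that face is determined by the polytope alone, while $a_t$ is an argmax over an \emph{arbitrary} active set $\mA_t$ and $w_t$ is determined by the gradient; there is no mechanism forcing them onto opposite sides of that particular face. The structural fact the actual proof of \cite[Theorem 3]{LJ15} exploits is different: one takes $K$ to be the \emph{minimal face of $\mP$ containing $x_t$}; because faces are extreme sets and $x_t$ is a proper convex combination of its active vertices, the whole active set lies in $K$ — that is the face compatible with $a_t$, and it is generally not the face achieving the minimum in the pyramidal-width definition, so the two roles cannot be merged as in your sketch. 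Second, your ``cosine identification'' is asserted rather than proved: the unit normal achieving $\dist{\aff{f}, \mP^f}$ bears no a priori relation to $\hat{e}_t = (x_t - x^\ast)/\|x_t - x^\ast\|$, and a separation argument with that normal yields a bound involving the alignment of $\nabla f(x_t)$ with \emph{that normal}, not with $\hat{e}_t$. Controlling the pairwise gap against $\innp{\nabla f(x_t), \hat{e}_t}$ uniformly over all admissible active sets and all positions of $x^\ast$ is precisely what the pyramidal \emph{directional} width machinery of \cite{LJ15} (minimum over active sets, maximum over vertex pairs, minimum over feasible directions in $\mathrm{cone}(K - x_t)$) is built to do, and none of it appears in your argument. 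A final caveat: \cref{cor:affinepyr_sameas_pyr} only identifies the paper's two quantities ($\w{\mP}$ and $\what_{\mP}$) with each other; the equivalence of this facial-distance notion with the original pyramidal width of \cite{LJ15} — which you implicitly rely on when claiming to reconstruct their geometric argument in these terms — is a separate nontrivial theorem due to \cite{PR18}, not established in this paper.
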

To quantify rates of \AFW{} for solving \cref{prob:intersection_k} and, more generally, for optimizing a \PL{} objective over a product polytope, we rely on the following lemmas:
\begin{lemma}[\gabrev{{\cite[Lemma 3.32]{BCC+23}}}]
\label{lemma:geom_pl}
Let $\mP$ be a polytope with pyramidal width $\pw{\mP} > 0$ and let $f$ be a convex and $\mu$-\PL{} function. Then, it holds:
\begin{equation}
%
    \innp{\nabla f(x_t), a_t - w_t}^2 \ge \frac{\pw[2]{\mP} \mu}{2}h_t\,,
\end{equation}
with $a_t \in \arg\max_{v \in \mA_t} \innp{\nabla f(x_t), x}$, $w_t \in \arg\min_{v \in \mP} \innp{\nabla f(x_t), x}$ and $h_t \defi f(x_t) - f(x^\ast)$.
\end{lemma}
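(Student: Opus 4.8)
The plan is to chain the scaling inequality of \cref{thm:scaling_ineq} with convexity and the quadratic growth consequence of the $\mu$-\PL{} condition. First I would fix $x^\ast$ to be the minimizer closest to $x_t$, i.e. the projection of $x_t$ onto $\Omega^\ast$, so that $\|x_t - x^\ast\| = \min_{y \in \Omega^\ast}\|x_t - y\|$. This choice is what makes the quadratic growth bound applicable for this particular $x^\ast$, while the scaling inequality holds for any optimal point and in particular for this one.

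With the pairwise direction $\tilde{d}_t = a_t - w_t$ and error direction $\hat{e}_t = (x_t - x^\ast)/\|x_t - x^\ast\|$, \cref{thm:scaling_ineq} gives
\[
\innp{\nabla f(x_t), a_t - w_t} \ge \pw{\mP}\, \innp{\nabla f(x_t), \hat{e}_t} = \pw{\mP}\,\frac{\innp{\nabla f(x_t), x_t - x^\ast}}{\|x_t - x^\ast\|}.
\]
By convexity (inequality $\circled{1}$ in \cref{def:conv_smooth}), applied with $x = x^\ast$ and $y = x_t$, we obtain $\innp{\nabla f(x_t), x_t - x^\ast} \ge f(x_t) - f(x^\ast) = h_t$, so the right-hand side is bounded below by $\pw{\mP}\, h_t / \|x_t - x^\ast\|$.

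It then remains to upper bound $\|x_t - x^\ast\|$. Since $f$ is convex and $\mu$-\PL{}, it obeys the $\mu$-quadratic growth condition, which for the nearest minimizer reads $h_t \ge \tfrac{\mu}{2}\|x_t - x^\ast\|^2$, hence $\|x_t - x^\ast\| \le \sqrt{2 h_t/\mu}$. Substituting,
\[
\innp{\nabla f(x_t), a_t - w_t} \ge \pw{\mP}\,\frac{h_t}{\sqrt{2 h_t/\mu}} = \pw{\mP}\,\sqrt{\frac{\mu\, h_t}{2}},
\]
and squaring both sides yields $\innp{\nabla f(x_t), a_t - w_t}^2 \ge \frac{\pw[2]{\mP}\,\mu}{2}\,h_t$, which is exactly the claim.

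Rather than a genuine obstacle, the one point requiring care is the coordination of $x^\ast$: the scaling inequality must be invoked with the same optimal point used in the growth bound, which is why I pin $x^\ast$ to the nearest minimizer from the outset. I would also remark that suboptimality of $x_t$ ensures $h_t > 0$ and therefore $\innp{\nabla f(x_t), \hat{e}_t} > 0$, so every division above is well defined and \cref{thm:scaling_ineq} legitimately applies.
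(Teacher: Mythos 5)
Your proof is correct and follows essentially the same route as the paper's: it chains the scaling inequality of \cref{thm:scaling_ineq} (with $x^\ast$ chosen as the nearest minimizer), convexity, and the quadratic-growth consequence of the $\mu$-\PL{} condition. The only cosmetic difference is that you work with first powers and square at the end, whereas the paper squares the scaling inequality upfront; the ingredients and their order of use are identical.
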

%
%
\begin{proof}
\gabrev{
In order to prove the lemma, we rely on the notion of quadratic growth (\QG{}), which is known to be equivalent to the \PL{} condition under convexity, see \cref{def:pl}.
This holds, in particular, in the case of our objective in \cref{prob:intersection_k}.
We then get the following inequalities:}
\begin{equation}
\begin{array}{rcl}
%
\innp{\nabla f(x_t), a_t - w_t}^2 &\ge & \delta_\mX^2\frac{\innp{\nabla f(x_t), x_t - x^\ast}^2}{\|x_t - x^\ast\|^2}\\
&\ge & \delta_\mX^2\frac{ h_t^2}{\|x_t - x^\ast\|^2}\\
&\ge& \delta_\mX^2\frac{ h_t}{\|x_t - x^\ast\|^2} \frac{\mu}{2}\|x_t - x^\ast\|^2\\
& = & \frac{\delta_\mX^2 \mu}{2}h_t\,,
\end{array}
\end{equation}
the first one by \cref{thm:pyrwidth} and picking $x^\ast \in \argmin_{z \in \Omega^\ast} \|x_t - z\|^2$, the second one by convexity of $f$, and the last one because $f$ is $\mu$-QG.
\end{proof}

\begin{lemma}\label{lemma:d_bound}
For each step $t$ of \cref{algo:AFW}, the following holds:
\[
2 \innp{\nabla f(x_t), d_t} \ge \innp{\nabla f(x_t), a_t - w_t},
\]
with $w_t \in \arg\min_{v \in \mP} \innp{\nabla f(x_t), x}$.
\end{lemma}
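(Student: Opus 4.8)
The plan is to unwind the definition of $d_t$ from \cref{algo:AFW} and reduce the claim to the elementary inequality $\max\{A,B\}\ge\tfrac12(A+B)$; no convexity, smoothness, or polytope geometry is needed. First I would recall that at lines 3--5 the step direction $d_t$ is selected as the better of two candidates: the Frank--Wolfe direction, associated with $w_t\in\argmin_{v\in\mP}\innp{\nabla f(x_t),v}$, and the away direction, associated with $a_t\in\argmax_{v\in\mA_t}\innp{\nabla f(x_t),v}$. With the orientation used throughout (so that $\innp{\nabla f(x_t),d_t}$ records the progress of the chosen step), these two candidates contribute the Frank--Wolfe gap $\innp{\nabla f(x_t),x_t-w_t}$ and the away gap $\innp{\nabla f(x_t),a_t-x_t}$, both nonnegative by the optimality of $w_t$ and $a_t$, and the selection rule gives exactly
\[
\innp{\nabla f(x_t),d_t}=\max\bigl\{\innp{\nabla f(x_t),x_t-w_t},\,\innp{\nabla f(x_t),a_t-x_t}\bigr\}.
\]

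Next I would decompose the pairwise direction through $x_t$,
\[
\innp{\nabla f(x_t),a_t-w_t}=\innp{\nabla f(x_t),a_t-x_t}+\innp{\nabla f(x_t),x_t-w_t},
\]
so that the right-hand side of the claim is precisely the \emph{sum} of the same two gaps that appear in the maximum above.

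Finally, combining the two displays with the elementary bound $2\max\{A,B\}\ge A+B$, which holds for all reals since $\max\{A,B\}-\tfrac12(A+B)=\tfrac12|A-B|\ge0$, yields
\[
2\innp{\nabla f(x_t),d_t}=2\max\{A,B\}\ge A+B=\innp{\nabla f(x_t),a_t-w_t},
\]
where $A$ and $B$ denote the two gaps. The only real obstacle is bookkeeping rather than mathematics: one must pin down the sign/orientation convention of $d_t$ in \cref{algo:AFW} so that $\innp{\nabla f(x_t),d_t}$ genuinely equals the \emph{larger} of the two gaps (and not its negative), and verify that every iteration indeed takes $d_t$ to be one of these two directions, after which the inequality is immediate.
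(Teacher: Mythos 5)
Your proof is correct and takes essentially the same route as the paper: the paper's proof is a two-case analysis (FW step vs.\ away step) in which the chosen direction's gap dominates the other gap, which is exactly your inequality $2\max\{A,B\}\ge A+B$ unrolled case by case, combined with the same decomposition $a_t-w_t=(a_t-x_t)+(x_t-w_t)$ through $x_t$.
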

\begin{proof}
If \cref{algo:AFW} takes a \FW{} step (Line 6), $d_t = x_t - w_t$ and $\innp{\nabla f(x_t), x_t - w_t} \ge \innp{\nabla f(x_t), a_t - x_t}$, and then $\innp{\nabla f(x_t), 2(x_t - w_t)} \ge \innp{\nabla f(x_t), x_t - w_t + a_t - x_t}$.
If, instead, \cref{algo:AFW} takes an away step (Line 11), $\innp{\nabla f(x_t), a_t - x_t} \ge \innp{\nabla f(x_t), x_t - w_t}$, and then $\innp{\nabla f(x_t), 2(a_t - x_t)} \ge \innp{\nabla f(x_t), a_t - x_t + x_t - w_t}$.
\end{proof}

\begin{lemma}[{\cite[Lemma 1.5]{BCC+23}}]\label{lemma:progress_smooth_survey}
Let $f$ be an $L$-smooth function, and $y \defi x-\lambda d$, where $d$ is an arbitrary vector (i.e., a direction) and $\lambda > 0$. Then, if $y \in \mathsf{dom}f$;
\[
f(x) - f(y) \ge \lambda\frac{\innp{\nabla f(x), d}}{2} \quad \text{ for } 0 \le \lambda \le \frac{\innp{\nabla f(x), d}}{L \|d\|^2}\,.
\]
\end{lemma}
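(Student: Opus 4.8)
The plan is to invoke the quadratic upper bound guaranteed by $L$-smoothness and then exploit the prescribed range of $\lambda$ to absorb exactly half of the first-order gain. First I would apply the upper inequality of \cref{def:conv_smooth} with the two arguments interchanged (i.e.\ letting the point $x$ here play the role of the second argument and $y$ the role of the first), which yields the descent-lemma form
\[
f(y) \le f(x) + \innp{\nabla f(x), y - x} + \frac{L}{2}\norm{y-x}^2,
\]
valid whenever $y \in \mathsf{dom} f$ so that smoothness applies along the relevant segment. Note that only the $L$-smoothness (upper) part of the definition is needed; convexity plays no role here.

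Next I would substitute $y \defi x - \lambda d$, so that $y - x = -\lambda d$, $\norm{y-x}^2 = \lambda^2 \norm{d}^2$, and $\innp{\nabla f(x), y-x} = -\lambda \innp{\nabla f(x), d}$. Rearranging the displayed inequality then gives
\[
f(x) - f(y) \ge \lambda \innp{\nabla f(x), d} - \frac{L}{2}\lambda^2 \norm{d}^2.
\]
It therefore remains only to show that the right-hand side is at least $\lambda \innp{\nabla f(x), d}/2$, which is the claimed progress bound.

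Since $\lambda > 0$, dividing the target inequality through by $\lambda$ reduces it to $\frac{1}{2}\innp{\nabla f(x), d} \ge \frac{L}{2}\lambda \norm{d}^2$, i.e.\ to $\lambda \le \innp{\nabla f(x), d}/(L\norm{d}^2)$, which is exactly the stated upper endpoint of the admissible range for $\lambda$. This closes the argument. There is no genuine obstacle here: the proof is entirely a short calculation, and the only points requiring care are bookkeeping ones—ensuring the gradient is evaluated at $x$ rather than at $y$ (handled by swapping the arguments in the smoothness definition), and checking that the upper endpoint of the $\lambda$-interval cancels precisely one half of the linear term $\lambda\innp{\nabla f(x), d}$, leaving the asserted factor of $1/2$.
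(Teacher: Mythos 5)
Your proof is correct, and it coincides with the standard argument: the paper itself gives no proof of this lemma (it is imported verbatim from \cite[Lemma 1.5]{BCC+23}), and your derivation—apply the $L$-smoothness upper bound $\circled{2}$ of \cref{def:conv_smooth} at $y = x - \lambda d$, then note that the step-size cap $\lambda \le \innp{\nabla f(x), d}/(L\norm{d}^2)$ makes the quadratic term absorb exactly half of the linear term—is precisely how the cited reference proves it. The only bookkeeping point, division by $\lambda$, is legitimate since the hypothesis gives $\lambda > 0$, and the $\lambda = 0$ endpoint holds trivially with both sides equal to zero.
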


\Dav{
Having the following proof is stupid since it is implied by \citep[Corollary 3.33]{BCC+23} with $\theta = 1/2$, since for that value of $\theta$ the sharpness condition is exactly QG.
}

\begin{proof}\linkofproof{prop:linconv_pw}
    In the first part of the proof, we show linear convergence of \cref{algo:AFW} over $\mP$ under the \PL{} inequality. Our proof adapts the one of {\cite[Theorem 2.29]{BCC+23}}, by exploiting the PL condition rather than strong convexity. This is a routinary analysis and not one of the main contribution of our work. However, after finishing this analysis, we can use the values of our condition numbers in the product polytope to obtain a quantified linear convergence rate to see the impact of our study.
%

By \cref{lemma:geom_pl} and \cref{lemma:d_bound}, it holds:
\begin{equation}\label{eq:linconv_pw_1}
h_t \le \frac{2\innp{\nabla f(x_t), a_t - w_t}^2}{\pw[2]{\mP}\mu} \le \frac{8\innp{\nabla f(x_t), d_t}^2}{\pw[2]{\mP}\mu}.
\end{equation}
for $h_t \defi f(x_t)-f(x^\ast)$. For every step of \cref{algo:AFW}, by first applying convexity and then either a \FW{} step or an away step, we have
\begin{equation}\label{eq:linconv_pw_2}
h_t \le \innp{\nabla f(x_t), x_t - w_t} \le \innp{\nabla f(x_t), d_t},
\end{equation}
where we first applied convexity of $f$ and linear minimization, and then either a FW step or an away step.
Recall line 14 of \cref{algo:AFW}: $\lambda_t \defi \min\left\{ \Lambda_t,\; \frac{\langle \nabla f(x_t), d_t \rangle}{L\,\|d_t\|^2} \right\}$, it holds:
\begin{equation}\label{eq:linconv_pw_3}
\begin{array}{rclr}
f(x_t) - f(x_{t+1}) &=& h_t - h_{t+1}\\
&\ge& \frac{\innp{\nabla f(x_t), d_t}}{2} \min \left\{ \Lambda_t, \frac{\innp{\nabla f(x_t), d_t}}{L\|d_t\|^2}\right\} &[\text{\cref{lemma:progress_smooth_survey}, line 14 \cref{algo:AFW}}]\\
&\ge & \min \left\{ \Lambda_t \frac{\innp{\nabla f(x_t), d_t}}{2}, \frac{\innp{\nabla f(x_t), d_t}^2}{2L\diam[2]{\mP}}\right\}&\\
&\ge & \min \left\{\Lambda_t \frac{h_t}{2}, \frac{\pw[2]{\mP}\mu}{16L\diam[2]{\mP}}h_t\right\}\,, & [\text{\cref{eq:linconv_pw_1} and \cref{eq:linconv_pw_2}}]
\end{array}
\end{equation}
so
\begin{equation}\label{eq:linconv_pw_4}
h_{t+1} \le \left(1 - \min \left\{\frac{\Lambda_t}{2}, \frac{\pw[2]{\mP}\mu}{16L\diam[2]{\mP}}\right\}\right)h_t\,.
\end{equation}
We now proceed to show $h_{t+1} \le \left(1 - \frac{\mu\pw[2]{\mP}}{16L\diam[2]{\mP}}\right)h_t$ for any steps and at least half of the iterations, making sure that $\Lambda_t$ is bounded away from 0 to get linear rates.

Recall line 12 of \cref{algo:AFW}: $\lambda_t \defi \min\left\{ \Lambda_t,\; \frac{\langle \nabla f(x_t), d_t \rangle}{L\,\|d_t\|^2} \right\}$.
We now proceed to show that $h_{t+1} \le \left(1 - \frac{\mu\pw[2]{\mP}}{8L\diam[2]{\mP}}\right)h_t$ for both \FW{} and away steps, for at least half of the iterations.

For both types of steps, if $\lambda_t < \Lambda_t$, then the minimum in the first line of \cref{eq:linconv_pw_3} is achieved by the second term, which immediately yields the desired bound.

Next, we consider the cases where $\lambda_t = \Lambda_t$.
For \FW{} steps, \AFW{} sets $\Lambda_t = 1$ (see line 8 of \cref{algo:AFW}), hence the bound follows by applying \cref{eq:linconv_pw_4}. In fact, since $\mu \le L$ and $\pw{\mP} \le \diam{\mP}$, we have $\frac{\mu\,\pw[2]{\mP}}{8L\,\diam[2]{\mP}} \le \frac{1}{8} \le \frac{1}{2}$.
For away steps, A\FW{} sets $\Lambda_t = \frac{\gamma_{t,a_t}}{1-\gamma_{t,a_t}}$.
When $\lambda_t = \Lambda_t$, a drop step is performed (see line 24), which discards the away vertex $a_t$.
Since away vertices can only be removed if the active set $\mA_t$ is nonempty, and each iteration of \AFW{} adds at most one vertex to $\mA_t$, drop steps can only occur in at most half of the iterations.
%
$h_1 \le \frac{L\,\diam{\mP}^2}{2} = C$, see also {\cite[Theorem 2.2]{BCC+23}}.

To conclude the proof, the progress estimate appearing in the theorem statement follows directly from \cref{cor:pyr_width_lb_for_k_polytopes}.
Finally, the linear rates in the form $t = O(\log\left(\frac{1}{\epsilon}\right))$ are obtained by using  $(1-x)^r \le \exp(-rx)$, $\forall r > 0, x \in \mathbb{R}$, and solving for $t$ the last inequality below, as follows:
\[
h_t
\leq C \left(1 - \frac{\mu}{16L\diam[2]{\mP}}\alpha_{\pw{}}^2\right)^{\lceil (t-1)/2 \rceil}
\le
C \exp\left(- \frac{\mu}{16L\diam[2]{\mP}} {\left\lceil \frac{t-1}{2} \right\rceil}\right)
\le \epsilon\,.
\]
\end{proof}
\subsection{Linear rates in the product in terms of the vertex-facet distance}\label{a:linconv_vfd}

Similarly to \cref{a:linconv_pw}, here we use our bounds on the vertex-facet distance of the product polytope to quantify the linear convergence rates achieved by \AFW{} when optimizing \PL{} objectives over it.

\begin{proof}\linkofproof{prop:linconv_vf}
We first \gabrev{study}
linear convergence of AFW applied to a product polytope with a \PL{} objective. Next, we use \cref{prop:vfd_PxQ_bounds} to relate the convergence rate over $\mP$, characterized by its vertex-facet distance, to rates expressed in terms of the minimum vertex-facet distance among the polytopes in the product.

For convergence analysis, we refer to {\cite[Theorem 3.1]{BS15}}. We focus on quantifying the convergence rates by computing the parameters that their convergence bound contains, such as $\alpha_{\vf{}}$ and $\kappa$, in the case of our problem.
We show that for problem \eqref{prob:intersection_k}, and more generally for optimizing functions satisfying the PL condition over a polytope, $\kappa$ can be determined in a more straightforward way than in \cite{BS15}, where this requires solving a potentially challenging optimization problem.
\gabrev{The quantity $C$ appearing in {\cite[Theorem 3.1]{BS15}} is an upper bound on the primal gap $f(x_t) - f(x^\ast)$, see their Lemma 2.4, so we set $C$ here to the primal gap itself, but $C = \frac{L\diam[2]{\mP}}{2}$ would also work, see {\cite[Theorem 2.2]{BCC+23}}.}
The convergence rate of {\cite[Theorem 3.1]{BS15}} depends on a constant $\kappa$ used to compute $\alpha_{\vf{}}$, that is defined as the optimal value of an optimization problem through their Lemmas 2.2, 2.4 and 2.5. However, in our setting we can determine $\kappa$ directly, as {\cite[Lemma 2.5]{BS15}} essentially shows the \QG{} inequality. \gabrev{Since $f$ is $\mu$-\PL{} and $\mu$-\QG{}, see \cref{def:pl} and \cref{lemma:geom_pl}, then $\kappa \defi \frac{2}{\mu}$ for our objective $f(x)$.
To conclude the proof, the progress estimate appearing in the theorem statement follows
directly from \cref{prop:vfd_PxQ_bounds}.
}
\end{proof}
\section{Algorithms}\label{a:algos}

\gabrev{The pseudocode for the \FW{} algorithm can be found in, e.g., {\cite[Algorithm 2.1]{BCC+23}}. We then} present the pseudocode of the \AFW{} algorithm.

\begin{algorithm}[H]
\KwData{Convex and smooth function $f$, start vertex $x_0 \in \verts{\mP}$, \LMO{} over $\mP$}
Initialize active set: $\mA_0 \gets \{x_0\}$\\
Initialize active set weights: $\gamma_{0, x_0} \gets 1$\\
\For{$t = 0, \ldots, T-1$}{
$w_t \gets \argmin_{v \in \mP} \langle \nabla f(x_t), v \rangle$ \Comment*[r]{\FW{} vertex}
$a_t \gets \argmax_{v \in \mA_t} \langle \nabla f(x_t), v \rangle$ \Comment*[r]{away vertex in $\mA_t$}
\If{$\langle \nabla f(x_t), x_t - w_t \rangle \geq \langle \nabla f(x_t), a_t - x_t \rangle$}{

    $d_t \gets x_t - w_t$ \Comment*[r]{\FW{} step}
    $\Lambda_t \gets 1$\\
}
\Else{
    $d_t \gets a_t - x_t$ \Comment*[r]{away step}
    $\Lambda_t \gets \frac{\gamma_{t, a_t}}{1-\gamma_{t, a_t}}$\\
}
$\lambda_t \gets \min\left\{\Lambda_t, \frac{\innp{\nabla f(x_t), d_t}}{L\|d_t\|^2}\right\}$\\
$x_{t+1} \gets x_t - \lambda_td_t$\\
\If{$\langle \nabla f(x_t), x_t - w_t \rangle \geq \langle \nabla f(x_t), a_t - x_t \rangle$}{
    $\gamma_{t+1, v} \gets (1-\lambda_t)\gamma_{t, v}\quad \forall v \in \mA_t \setminus \{w_t\}$ \Comment*[r]{perform $\mathcal{R}$: update $\mA_t$ and weights}
    $\gamma_{t+1, w_t} \gets \begin{cases}
        \lambda_t & \text{if $w_t \in \mA_t$}\\
        (1-\lambda_t)\gamma_{t, w_t} + \lambda_t & \text{otherwise}
    \end{cases}$\\
    $\mA_{t+1} \gets \begin{cases}
        \mA_t \cup \{w_t\} & \text{if $\lambda_t<1$}\\
        \{w_t\} & \text{if $\lambda_t=1$}
    \end{cases}$
}
\Else{
    $\gamma_{t+1, v} \gets (1+\lambda_t)\gamma_{t, v}\quad \forall v \in \mA_t \setminus \{a_t\}$\\
    $\gamma_{t+1, a_t} \gets (1+\lambda_t)\gamma_{t, a_t} - \lambda_t$\\
    $\mA_{t+1} \gets \begin{cases}
        \mA_t \setminus \{a_t\} & \text{if $\gamma_{t+1, a_t} = 0$} \Comment*[r]{drop step}\\
        \mA_t & \text{if $\gamma_{t+1, a_t} > 0$}
    \end{cases}$
}
}
\caption{Away Frank Wolfe {\cite[Algorithm 2.2]{BCC+23}}}
\label{algo:AFW}
\end{algorithm}
The \emph{active set} $\mA_t$ of a polytope $\mP$ at iteration $t$ is defined as follows:
\[
\mA_t \defi \{v \in \verts{\mP}\,:\, x_t = \sum_{v \in \verts{\mP}} \gamma_v v,\, \gamma>0,\, \sum_{v \in \verts{\mP}}\gamma_v = 1\}\,,
\]
and we use $\gamma_{t,v}$ to denote the active set weight of a vertex $v \in \mA_t$ to represent $x_t$.

An essential component of any \FW{} variant is the choice of step size.
One rule is the \emph{line search}, in which at iteration $t$ one approximately or exactly finds $\argmin_{\lambda \in [0,\Lambda_t]} f(x_t + \lambda(v_t - x_t))$, where $v_t$ is the \LMO{} vertex and $\Lambda_t$ maintains feasibility.
Although this rule maximally decreases the current objective, it requires solving an additional subproblem and can be expensive.
An alternative, appearing on line 14 of \cref{algo:AFW}, is the \emph{short step} rule, which is obtained by minimizing a quadratic upper bound on the objective guaranteed by its smoothness. In fact, the short step rule is a cheaper approximation of the line search one, although it comes at the expense of knowing or estimating the smoothness constant.
Both step size rules ensure monotonic decrease of $f$ \cite{BCC+23}, leading to linear rates under standard assumptions.
In fact, any convergence rate proved for the short step variant carries over to the line search version, which makes larger progress per iteration, while the short step remains the preferred choice for clean theoretical analysis due to its simple form.

We note that \cref{algo:AFW} incorporates a procedure $\mathcal{R}$, described at lines 16-25, which refines the active set $\mA_t$ and the active weight vector $\lambda_t$ at each iteration. In particular, after line 21 of the algorithm, $\mathcal{R}$ updates the representation of the current iterate $x_t$ to a more efficient one, using a reduced active set $\tilde{\mA}_t \subseteq \mA_t$ with $|\tilde{\mA}_t| \leq |\verts{\mP}|$ as in \cref{algo:AFW}. We note that $\mathcal{R}$ can also be designed to implement the Carathéodory theorem, in which case $|\tilde{\mA}_t| \le n+1$, where $n$ is the dimension of $\mP$, cf. \cite[Appendix A]{BS15}, \cite{BPW24} and references therein. This variant is advantageous when the number of vertices of $\mP$ is significantly larger than 
its dimension. The notation $N$ employed in the proof of \cref{prop:linconv_vf} refers to these two upper bounds on $|\tilde{\mA}_t|$, i.e., since the product polytope has dimension $nk$, $N \in \{|\verts{\mP}|, nk+1\}$.

We note that one can modify the analysis of the \AFW{} algorithm to obtain similar convergence rates for a primal-dual computable gap, similarly as in \citep[Appendix D.1.2]{martinez2025beyond}, which shows the linear convergence for \AFW{} exploiting the pyramidal width and strong convexity of the objective.

Next, we provide the pseudocode for the alternating linear minimization (ALM) algorithm proposed in \cite{BPW23}, an adaptation of the cyclic block-coordinate FW (C-BC-FW) algorithm from \cite{BPS15} to the set-intersection problem. In our computational experiments in \cref{s:experiments}, we use ALM as a benchmark in solving problem \cref{prob:intersection_k}.



\begin{algorithm}[H]
\KwData{$L$-smooth and convex $f$, $(x_0, y_0) \in \mP \times \mQ$, $\lambda_t, \gamma_t \in [0,1]$, $t = 0, \dots, T-1$}

\For{$t = 0$ \KwTo $T-1$}{
    $w_t \gets \arg\min_{x \in \mP} \innp{\nabla_x f(x_t, y_t), x}$

	$x_{t+1} \gets x_t - \lambda_t (x_t - w_t)$

	$z_t \gets \arg\min_{y \in \mQ} \innp{\nabla_x f(x_{t+1}, y_t), y}$

	$y_{t+1} \gets x_t - \gamma_t (y_t - z_t)$
}
\caption{Alternating Linear Minimization}
\label{algo:alm}
\end{algorithm}


%
%
%
%
%
%
%
%
%
%
%
%
%
%
%
%
%
%
%
%
\section{More Experiments}\label{app:extra_experiments}

\gabrev{In this section, we present additional plots to further validate the practical behavior of the \FW{} C-BC-\FW{} and \AFW{} algorithms.
As in \cref{s:experiments}, the plots below display the primal and \FW{} gap of the examined algorithms, plotted against the elapsed computational time, in seconds.
The software and hardware settings are the same as the ones used to obtain the figures in \cref{s:experiments}.}

\gabrev{The plots presented below show the outcome of running the experiments of \cref{s:experiments} several times but with different configurations and seeds.
Doing this, we obtain the same relative performance among the algorithms:  when solving both instances where polytopes do not intersect (\cref{fig:k2n2500_ni}, \cref{fig:k2n5000_ni}, \cref{fig:k5n5000_ni}, \cref{fig:k5n10000_ni}) and when they have a non-empty intersection (\cref{fig:k2n10000_i}, \cref{fig:k5n10000_i}), the curves again exhibit the predicted linear rate for \AFW{} on product polytopes, and confirm that its empirical advantage persists across different randomizations.
Furthermore, these figures confirm that the runs presented in \cref{s:experiments} are representative of algorithmic behavior and not statistical outliers.}
\begin{figure}[htbp!]
\centering
\includegraphics[width=\textwidth]{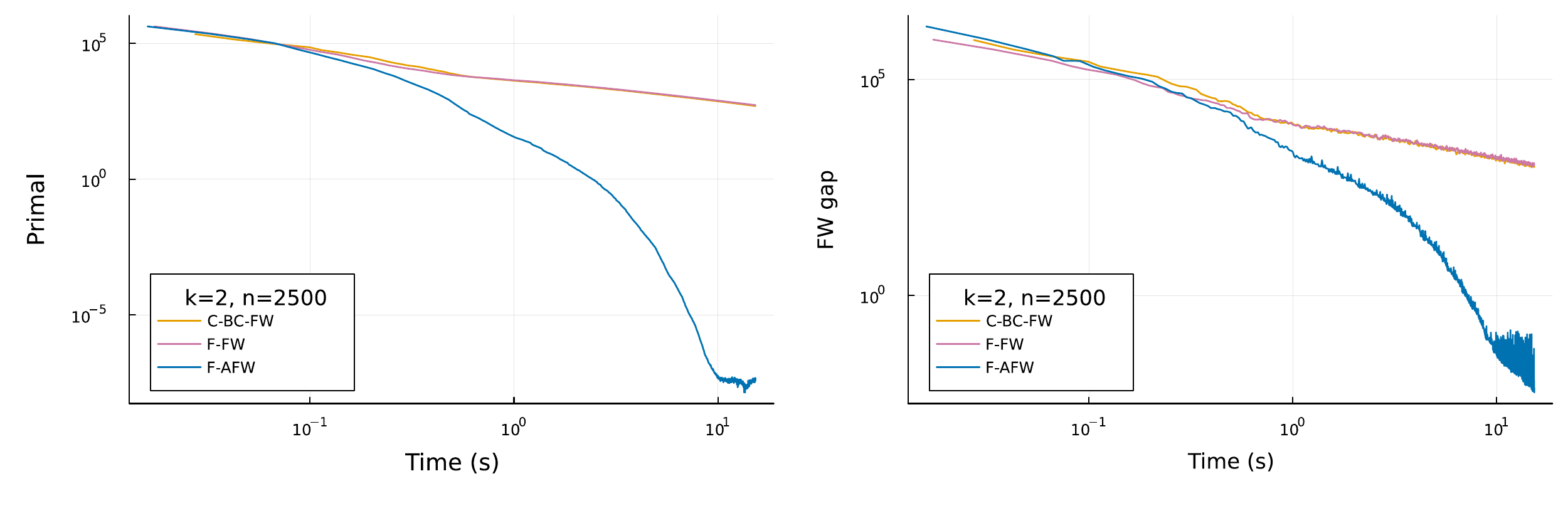}
\caption{Different variants of the AFW algorithm, run for up to $1\,000$ iterations on two non‑intersecting polytopes in $\R^{2\,500}$. }
\label{fig:k2n2500_ni}
\end{figure}
\begin{figure}[htbp!]
\centering
\includegraphics[width=\textwidth]{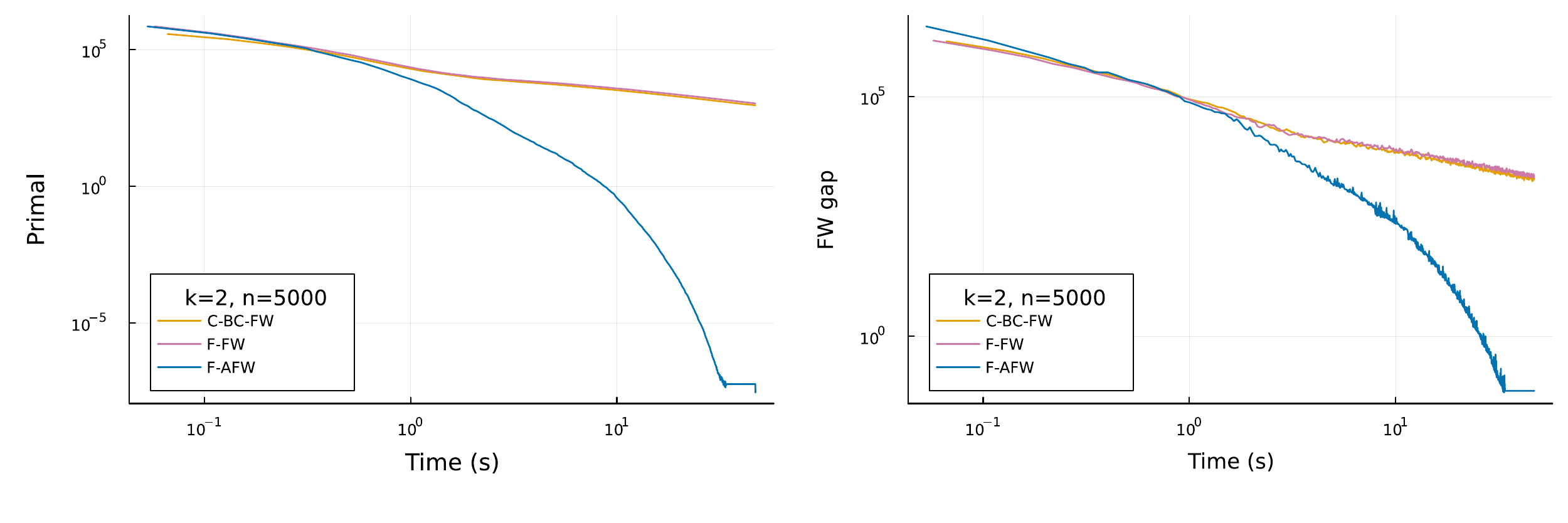}
\caption{Different variants of the AFW algorithm, run for up to $1\,000$ iterations on two non‑intersecting polytopes in $\R^{5000}$. }
\label{fig:k2n5000_ni}
\end{figure}
\begin{figure}[htbp!]
\centering
\includegraphics[width=\textwidth]{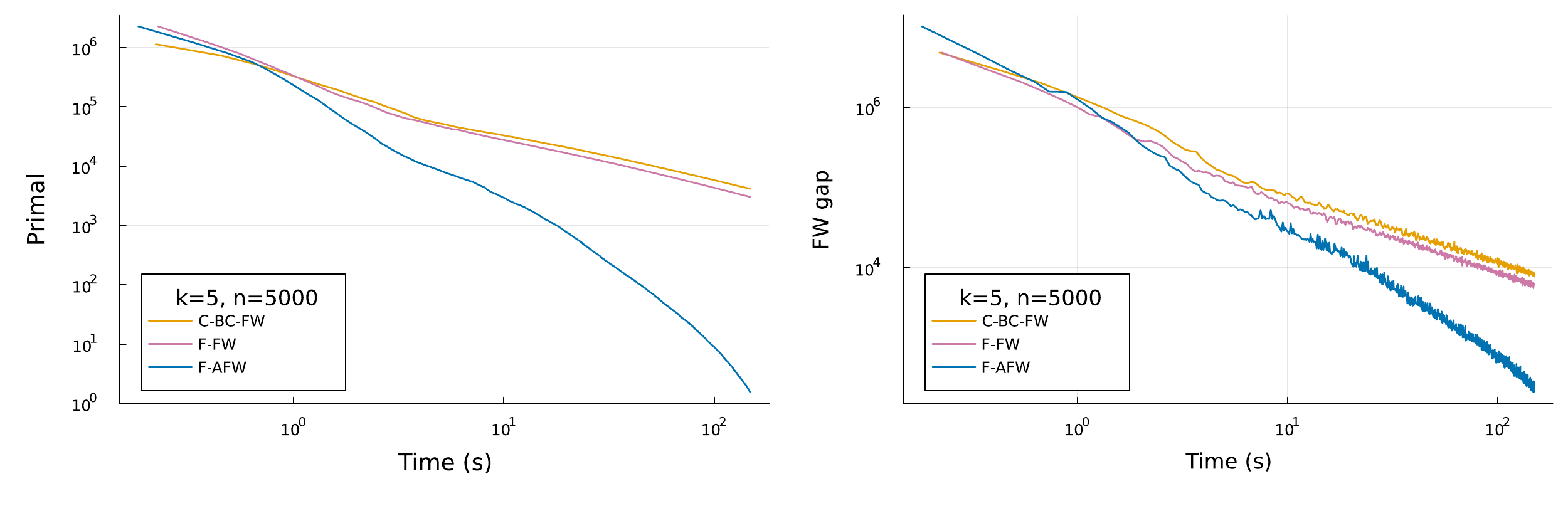}
\caption{Different variants of the AFW algorithm, run for up to $1\,000$ iterations on five non‑intersecting polytopes in $\R^{5\,000}$. }
\label{fig:k5n5000_ni}
\end{figure}
\begin{figure}[htbp!]
\centering
\includegraphics[width=\textwidth]{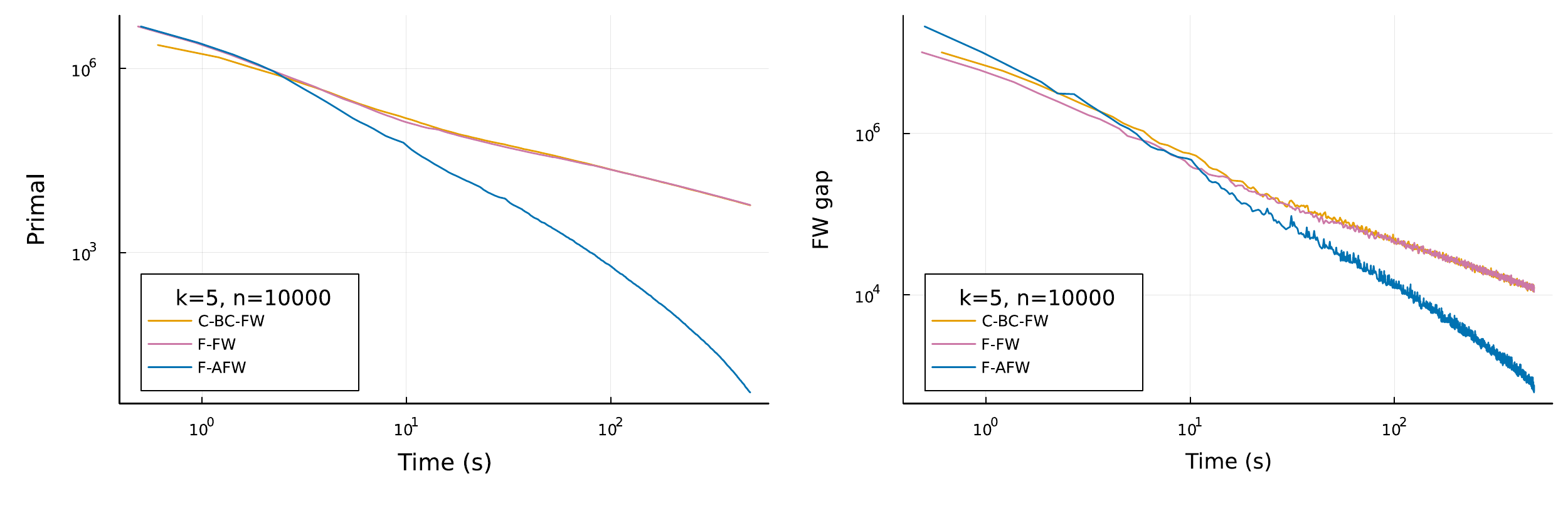}
\caption{Different variants of the AFW algorithm, run for up to $1\,000$ iterations on five non‑intersecting polytopes in $\R^{10\,000}$. }
\label{fig:k5n10000_ni}
\end{figure}
\begin{figure}[htbp!]
\centering
\includegraphics[width=\textwidth]{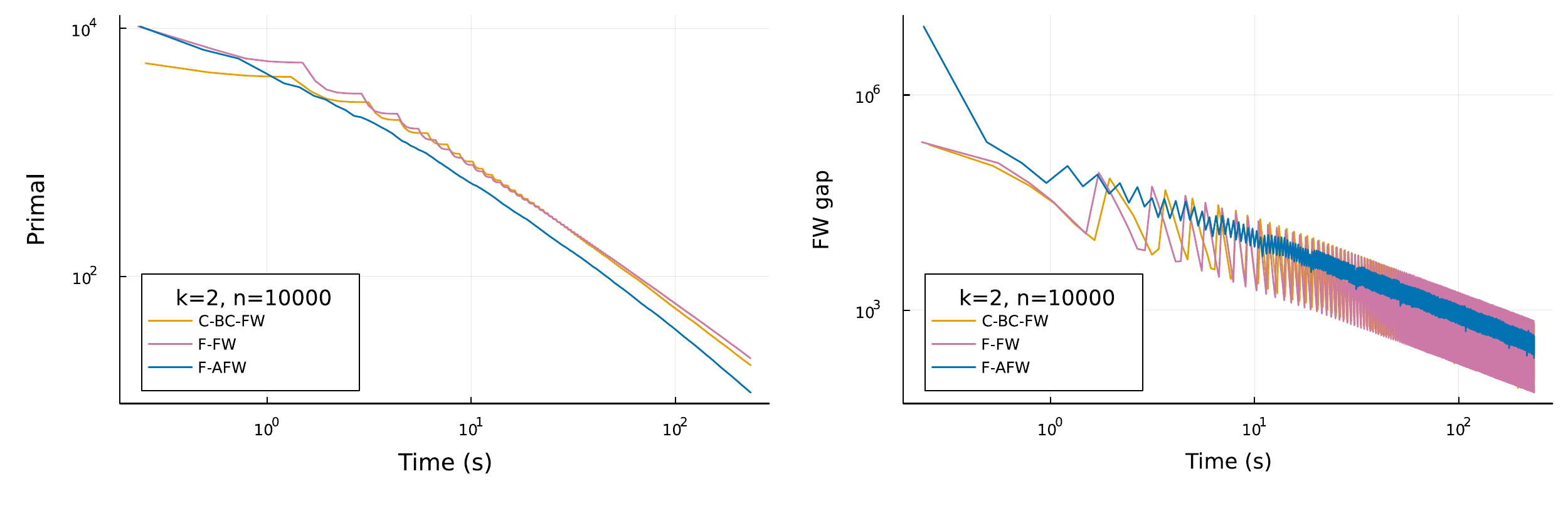}
\caption{Different variants of the AFW algorithm, run for up to $1\,000$ iterations on two intersecting polytopes in $\R^{10\,000}$. }
\label{fig:k2n10000_i}
\end{figure}
\begin{figure}[htbp!]
\centering
\includegraphics[width=\textwidth]{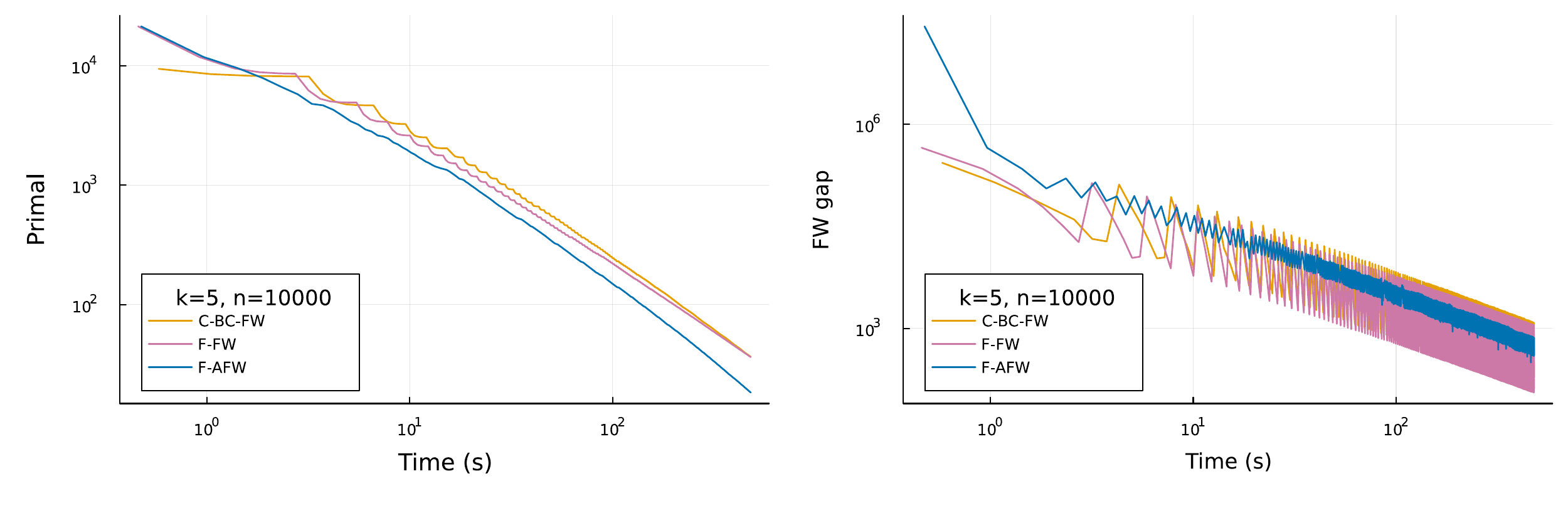}
\caption{Different variants of the AFW algorithm, run for up to $1\,000$ iterations on five intersecting polytopes in $\R^{10\,000}$. }
\label{fig:k5n10000_i}
\end{figure}

\gabrev{We also increase the scale by solving the intersection problem on two disjoint polytopes, generated as the convex hull of $100\,000$ vertices each, namely, around an order of magnitude larger than the example in \cref{fig:k2n20000}.
We provide the results of this test in \cref{fig:k2n10000_100000vertices_ni}. As shown in the plot, \AFW{} continues to outperform the other methods, underscoring its robustness even when individual \LMO{} calls become substantially more expensive. In fact, we note that, for each polytope, we always compute \LMO{}s that perform a single linear pass over the vertex list that was used to generate it, evaluating one dot product per vertex. The same implementation was used in \cref{s:experiments}. Because of this, the time to convergence in \cref{fig:k2n10000_100000vertices_ni} is longer, in absolute terms, than on instances created following the procedure explained in \cref{s:experiments}, i.e., with polytopes generated from a list of between $n$ and $2n$ vertices, see \cref{fig:k2n10000_15000vertices_ni} for a comparison with one such instance. Clearly, however, the relative performance of all the tested algorithms is virtually unchanged.}

\begin{figure}[htbp]
\centering
\includegraphics[width=\textwidth]{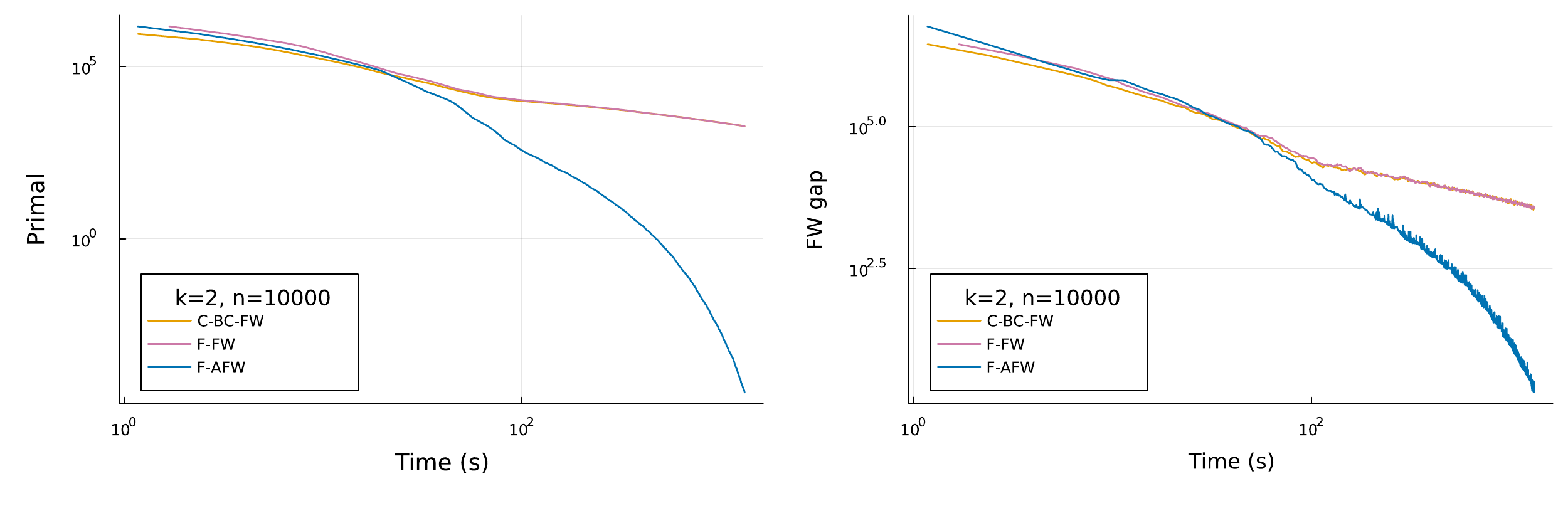}
\caption{Different variants of the AFW algorithm run for up to $1\,000$ iterations on two non‑intersecting polytopes in $\R^{10\,000}$, generated as the convex hull of $100\,000$ vertices.}
\label{fig:k2n10000_100000vertices_ni}
\end{figure}

\begin{figure}[htbp]
\centering
\includegraphics[width=\textwidth]{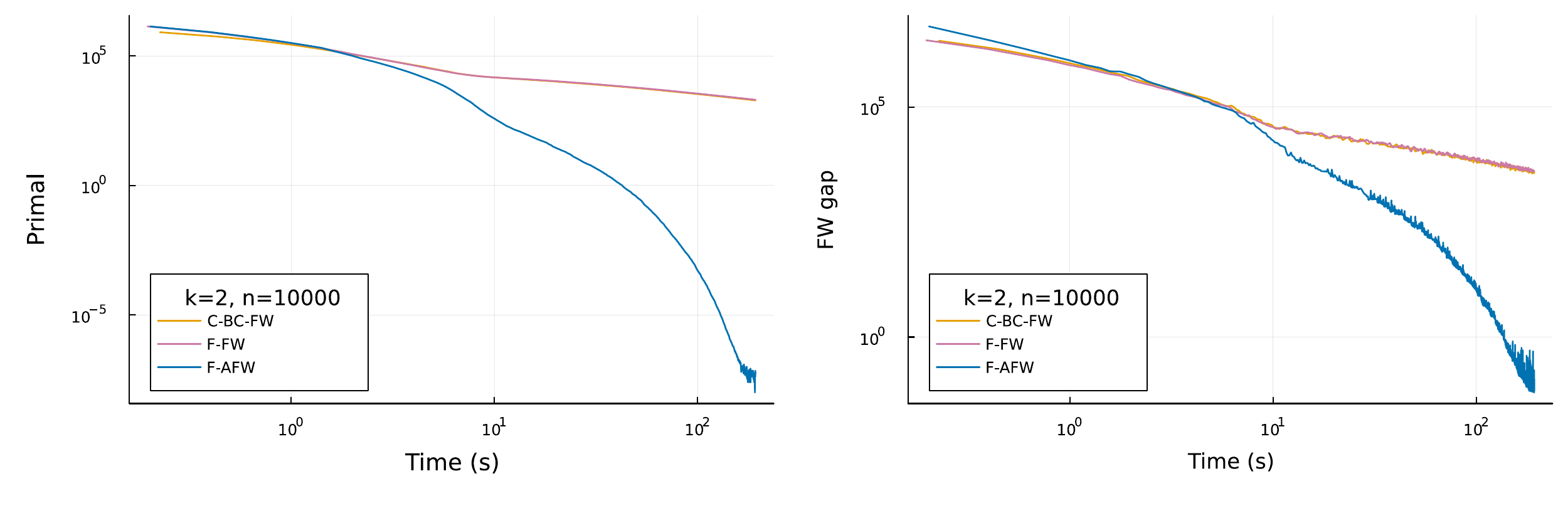}
\caption{Different variants of the AFW algorithm run for up to $1\,000$ iterations on two non‑intersecting polytopes in $\R^{10\,000}$, generated as the convex hull of $15\,000$ vertices.}
\label{fig:k2n10000_15000vertices_ni}
\end{figure}
\end{document}